\newif\ifarxiv
\algnewcommand\algorithmicleftcomment[1]{\(\triangleright\) \textit{#1}}%
\algrenewcommand\algorithmiccomment[1]{\hfill\(\triangleright\) \textit{#1}}%
\DeclareMathOperator*{\argmax}{arg\,max}
\DeclareMathOperator*{\argmin}{arg\,min}
\newcommand{\edt}[1]{{\color{black} #1}}
\newcommand{\safe}{\mathrm{s}}
\newcommand{\expec}[1]{\mathbb{E}\left[ #1 \right]}
\newcommand{\pv}{\mathbf{v}}
\newcommand{\px}{\mathbf{x}}
\newcommand{\traj}{\vec{\mathbf{x}}}
\newcommand{\pz}{\mathbf{z}}
\newcommand{\sasvec}{\mathbf{h}(\px)}
\newcommand{\bvec}{\mathbf{b}}
\newcommand{\betavec}{\mathbf{\beta}}
\newcommand{\reals}{\mathbb{R}}
\newcommand{\polys}{\mathbb{P}[\px]}
\newcommand{\sospolys}{\Sigma^2[\px]}
\newcommand{\up}[1]{\overline{#1}}
\newcommand{\low}[1]{\underline{#1}}
\theoremstyle{definition}
\newtheorem{definition}{Definition}
\newtheorem{problem}{Problem}
\theoremstyle{plain}
\newtheorem{theorem}{Theorem}
\newtheorem{corollary}[theorem]{Corollary}
\newtheorem{proposition}[theorem]{Proposition}
\newtheorem{lemma}[theorem]{Lemma}
\theoremstyle{plain}
\newtheorem{remark}{Remark}
\begin{document}

\title{%\LARGE \bf
On Polynomial Stochastic Barrier Functions: \\Bernstein Versus Sum-of-Squares
}

\author{Peter Amorese and Morteza Lahijanian
\thanks{
Submitted for review on March 17 2025. 
This work was supported by the National Science Foundation (NSF) Center for Autonomous Air Mobility \& Sensing (CAAMS) under award 2137269.
}
\thanks{Authors are with the Dept. of Aerospace Eng. Sciences at University of Colorado Boulder, USA
       \texttt{\{firstname.lastname\}@colorado.edu}}%
}

\maketitle

\ifarxiv
    \fancypagestyle{firstpage}{
        \fancyhf{} % Clear all headers and footers
        \fancyhead[C]{To appear in IEEE Control Systems Letters (L-CSS) 2025} % Center header
        \renewcommand{\headrulewidth}{0.0pt} % Optional: add a line under header
    }
    \thispagestyle{firstpage}
\else
    \thispagestyle{empty}
\fi
%\pagestyle{empty}

%%%%%%%%%%%%%%%%%%%%%%%%%%%%%%%%%%%%%%%%%%%%%%%%%%%%%%%%%%%%%%%%%%%%%%%%%%%%%%%%
\begin{abstract}
    % Stochastic Barrier Functions (SBF) are a prominent method for certifying the safety of stochastic systems. Synthesizing a SBF can be reduced to a functional optimization problem, which state-of-the-art methods use Sum-of-Squares (SoS) polynomials for. These polynomials are chosen for the ability to compute \textit{relaxations}, i.e. bounds, of the function over an interval, required for ensuring the SBF is a valid safety certificate.
    % This work introduces a new formulation based on Bernstein polynomials and compares the theoretical and empirical implications to SoS methods. We find that the Bernstein formulation results in a \textit{linear program} (rather than a semi-definite program for SoS) and the relaxations have admirable theoretical convergence rates. However, our experiments show the Bernstein approach struggles to empirically compete with SoS, highlighting an interesting gap between the theoretical advantages and practical feasibility.

    Stochastic Barrier Functions (SBFs) certify the safety of stochastic systems by formulating a functional optimization problem, which state-of-the-art methods solve using Sum-of-Squares (SoS) polynomials.
    This work focuses on polynomial SBFs and introduces a new formulation based on Bernstein polynomials and provides a comparative analysis of its theoretical and empirical performance against SoS methods. We show that the Bernstein formulation leads to a \textit{linear program} (LP), in contrast to the \emph{semi-definite program} (SDP) required for SoS, and that its relaxations exhibit favorable theoretical convergence properties. However, our empirical results reveal that the Bernstein approach struggles to match SoS in practical performance, exposing an intriguing gap between theoretical advantages and real-world feasibility.
\end{abstract}

\begin{IEEEkeywords}
    Stochastic Barrier Functions, Formal Verification, Sum of Squares, Polynomial Relaxations 
\end{IEEEkeywords}

%%%%%%%%%%%%%%%%%%%%%%%%%%%%%%%%%%%%%%%%%%%%%%%%%%%%%%%%%%%%%%%%%%%%%%%%%%%%%%%%
\section{Introduction}

% Complex dynamical systems are ubiquitous and growing in prevalence. A major hurdle challenging the deployment of these systems in safety-critical applications is the inability to guarantee safety. In general, widening our understanding of these systems to both non-linear and stochastic dynamical models can more aptly describe the behavior of the true system. Not surprisingly, however, formally verifying safe behavior of such models becomes much more challenging. Stochastic Barrier Functions (SBF's) yeild \textit{safety certificates}, i.e. bounds on the system's probability of remaining safe for some time horizon. State-of-the-art approaches to synthesizing SBF's rely on Sum-of-Squares (SoS) polynomial relaxations \needcite{}. While SoS SBF methods have been shown to be very effective, alternative approaches to polynomial bounding, such as using Bernstein polynomials, have yet to be investigated. This work offers a nuanced study and comparison between the efficacy of the state-of-the-art SoS approach to a novel alternative using Bernstein polynomials. 
% Our main findings show that using Bernstein Polynomial relaxations yield enticing theoretical advantages over Sum-of-Squares, however, due to the blow-up in the number of constraints, empirically these advantages have yet to be realized.

\IEEEPARstart{R}{ealistic} systems often exhibit complex dynamics and are subject to uncertainty, making \emph{safety assurance} a significant challenge in \emph{safety-critical} applications. To accurately capture their behavior, it is essential to consider both nonlinear and stochastic dynamical models. However, formal safety verification for such models remains highly challenging. \emph{Stochastic Barrier Functions} (SBFs) \cite{prajna2007framework, santoyo2021barrier}
provide \textit{safety certificates}, i.e., quantitative bounds on the probability that a system remains safe over a given time horizon. 
\edt{Synthesizing a SBF is a functional optimization problem, relying on function templates, with Sum-of-Squares (SoS) polynomials}
%Methods for synthesizing SBFs rely on function-template formulations, with Sum-of-Squares (SoS) polynomials 
\cite{santoyo2021barrier}
being a widely used and effective approach. While SoS-based methods have demonstrated success, alternative polynomial representations, such as Bernstein polynomials, remain largely unexplored in this context. This work investigates the use of Bernstein polynomials for SBF synthesis and compares their performance against the established SoS-based approach, assessing their advantages and disadvantages for safety verification in stochastic dynamical systems.

A key advantage of SBFs for safety guarantees is that they bypass the challenge of uncertainty propagation through nonlinear dynamics \cite{prajna2007framework}, instead focusing on finding an \emph{energy function} \edt{of the state} that satisfies certain constraints over regions of interest. \edt{Intuitively, if initial states have low energy and unsafe regions have high energy, then bounding the system's increase in energy in a single step anywhere bounds the probability of being unsafe.} If these constraints hold, the SBF provides a guaranteed lower bound on the system's probability of safety, serving as a \emph{barrier certificate}. The choice of the candidate function affects the tightness of these bounds, motivating SBF synthesis as a functional optimization problem. \edt{To achieve informative certificates for complex systems}, the candidate function must be expressed using a basis that allows i) universal function approximation and ii) efficient bounding over a given state set, giving rise to many choices of function templates \cite{mathiesen2022safety, mazouz2024piecewise, jagtap2020formal}. However, bounding \edt{an arbitrary} nonlinear, potentially non-convex function over a set is NP-hard~\cite{murty1985some}, even for polynomials.

% SoS relaxation techniques provide a means of determining if a generic polynomial belongs to a class of non-negative polynomials. Without loss of generality, this procedure can be used to bound a multivariate polynomial over a set of states (such as an interval). As an alternative to SoS relaxation, Bernstein polynomials offer another promising means of bounding a polynomial over an interval using simple linear transformations of coefficients \cite{garloff1985convergent}. Because of this property, Bernstein polynomials have been explored for synthesizing Lyapunov stability certificates. In fact, \cite{ben2016linear} compares SoS and Bernstein relaxations for the purposes of synthesizing Lyapunov functions, and observes that Bernstein polynomials exhibit better numerical stability with comparable computational effort. To the best of our knowledge, Bernstein polynomials have not yet been used for synthesizing SBF's.

SoS relaxation techniques address this by providing a systematic way to determine polynomial non-negativity, enabling function bounding over a given domain via Semi-Definite Program (SDP) optimization \cite{stengle1974nullstellensatz}. 
Alternatively, Bernstein polynomials provide efficient bounds using simple coefficient transformations \cite{garloff1985convergent}. Prior work~\cite{ben2016linear} compares SoS and Bernstein methods for Lyapunov function synthesis, showing that Bernstein polynomials offer better numerical stability with comparable computational effort. However, Bernstein polynomials have not yet been explored for SBF synthesis.

% This paper presents the first formulation of SBF synthesis using Bernstein polynomial and provides an in-depth analysis on the comparison between SoS and Bernstein methods. Specifically, our contributions are as follows. We show that the SBF synthesis problem can be formulated as a \textit{linear program} using Bernstein polynomial relaxations. We examine and compare the convergence rates of various techniques to produce tighter probability bounds. We perform empirical comparisons between SoS and the proposed Bernstein approach and analyze the discrepancies between the theoretical advantages and practical shortcomings of the Bernstein formulation.

This paper presents the \emph{first} formulation of SBF synthesis using Bernstein polynomials, to the best of our knowledge, and provides a comprehensive analysis comparing Bernstein and SoS methods. Our key contributions are threefold. 
(i) We show that SBF synthesis can be formulated as a \emph{linear program} (LP) using Bernstein polynomial relaxations, offering a computationally efficient alternative to SoS-based approaches (SDP optimization). 
(ii) We analyze and compare the theoretical convergence rates of different techniques for obtaining tighter probability bounds for both Bernstein and SoS methods. 
(iii) Through empirical evaluations, we assess the performance of SoS and Bernstein methods, juxtaposing the enticing theoretical advantages of Bernstein against the practical shortcomings compared to SoS. %both the theoretical advantages and practical limitations of the Bernstein formulation.

% This paper presents an in-depth analysis on the comparison between SoS and Bernstein methods for synthesizing SBFs. Specifically, our contributions are as follows. We show that the SBF synthesis problem can be formulated as a \textit{linear program} using Bernstein polynomial relaxations. We examine and compare the convergence rates of various techniques to produce tighter probability bounds. We perform empirical comparisons between SoS and the proposed Bernstein approach and analyze the discrepancies between the theoretical advantages and practical shortcomings of the Bernstein formulation.

%Focusing on discrete-time dynamics, the heart of our challenge lies in the inability to analytically characterize the evolution of this density through non-linear dynamics $f$, i.e. $p(f(\px_k))$. 
%An I-MDP may admit a less conservative bound compared to a SBF by categorically evolving the probability mass over multiple timesteps. 
%Additionally, discretization and \pa{convexification?, reachable set computation?} methods used in these approaches are known to scale poorly with the dimension of the system and fidelity of the procedure. 

\paragraph*{Related Work} 
% Another popular approach to safety-verification of stochastic systems is
% Abstraction-based methods~\cite{skovbekk2023formal, dutreix2022abstraction, lahijanian2015formal}
% \ml{add alot more citation: Abate, Zamani}
% . These works compute finite discrete Markov models that capture the stochastic evolution of the system, and use dynamics programming approaches on these model to obtain safety probability bounds for the underlying system. Work \cite{laurenti2023unifying} investigates the connection between these approaches and SBFs. Linked to the success of abstraction-based methods, \cite{mazouz2024piecewise} proposes using a piece-wise SBF template by partitioning the state-space, then searching for an individual barrier for each partition. 
% Another functional template used for SBF are neural networks \cite{mathiesen2022safety, vzikelic2023learning}, chosen for the admirable function approximation properties. Using a neural network separates SBF synthesis into a training procedure followed by an extensive bound-propagation-based verification of the network.  This work differs from the above in both focus and scope.  Specifically, this work focuses on polynomial SBFs and introduces a simple optimization (LP) for syntehsis of these functions and provides a direction comparison with other polynomial approaches.

\edt{Abstraction-based methods~\cite{skovbekk2023formal, dutreix2022abstraction, cauchi2019efficiency} offer an alternative approach to verifying safety using dynamic programming on discretized Markov models.}
%These methods construct finite discrete Markov models that approximate the stochastic evolution of the system and apply dynamic programming techniques to derive safety probability bounds.
The connection between abstraction-based methods and SBFs has been explored in~\cite{laurenti2023unifying}. 
%Work \cite{mazouz2024piecewise} proposes a piecewise SBF formulation, where the state space is partitioned, and an SBF is computed for each partition.  
\edt{Neural network SBFs~\cite{mathiesen2022safety, vzikelic2023learning} offer strong function approximation capability, at the cost of requiring expensive bound-propagation-based constraint verification.}
\edt{Among SoS methods, control barrier verification has been formulated as a LP using Diagonally-Domaninant SoS \cite{pond2023fast} polynomials, which cannot universally approximate non-negative functions \cite{josz2017counterexample}, rendering the approach asymptotically incomplete.}
%An alternative approach to SBF synthesis leverages neural networks~\cite{mathiesen2022safety, vzikelic2023learning}, which offer strong function approximation capabilities. In this case, SBF synthesis involves a two-step process: training the neural network, followed by rigorous bound-propagation-based verification.  
%However, the verification step often becomes the main bottleneck.

This work differs from the above approaches in both focus and scope. Specifically, we focus on polynomial SBFs and introduce a novel LP formulation for their synthesis. Additionally, we provide a direct comparison between Bernstein and SoS relaxation approaches, highlighting their respective advantages and limitations.

\section{Problem Formulation and Approach}

We consider a (polynomial) discrete-time stochastic system described by the stochastic difference equation
\begin{equation} 
    \label{eq:dynamics}
    \px_{k+1} = f(\px_{k}) + \pv_k,
\end{equation}
where $k \in \mathbb{N}_0$ is the time step, $\px_k \in \mathbb{R}^D$ is the state, $f:\mathbb{R}^D \to \mathbb{R}^D$ is the vector field, 
% i.e., $f(\px) = [f_1(\px), \ldots, f_D(\px)]^\top$, 
and $\pv_k \in \mathbb{R}^D$ is an i.i.d. random variable distributed according to $p(\pv)$ with finite moments, i.e., $\pv_k \sim p(\pv)$. 
Let $f_i(\px)$ denote the $i$-th dimension of $f(\px)$, i.e., $f(\px) = [f_1(\px), \ldots, f_D(\px)]^\top$, and denote by $\polys$ the set of all $D$-variate polynomials on $\reals^D$.
We assume each $f_i(\px)$ is a polynomial, i.e., 
% denote by $\polys$ the set of all $D$-variate polynomials on $\reals^D$.
% For the purposes of this work, 
% We assume the dynamics are are polynomial, i.e. $f(\px) = [f_1(\px), \ldots, f_D(\px)]^\top$ where 
$f_i(\px) \in \polys$ for all $i\in\{1,\ldots,D\}$.

Let $X \subset \mathbb{R}^D$ be a compact set of interest, $X_s \subset X$ the safe set, and $X_0 \subseteq X_s$ the initial set.  We define $X_u = X \setminus X_s$ to be the unsafe set. 

\begin{definition} [Semi-Algebraic Set]
    A set $S \subset \reals^D$ is a \emph{semi-algebraic set} iff $S = \{\px \mid \sasvec \geq 0\}$ for some vector of polynomials $\sasvec = (h_1(\px), \dots h_r(\px)) \in (\polys)^r$. %where $\sasvec \geq 0$ denotes element-wise non-negativity.
\end{definition}
\noindent We assume $X$, $X_s$, $X_u$, and $X_0$ are semi-algebraic sets. 
%We assume sets $X$, $X_s$, $X_u$, and $X_0$ are adequately represented by unions of hyper-rectangular regions of $\mathbb{R}^D$. 

%\begin{remark}
%    We remark that SoS approaches
%\end{remark}

%Note that any continuous real-valued $f$ can be arbitrarily well approximated up to an error term $\epsilon$ on $X$ \cite{stone1948generalized}.

%Given an initial state $\px_0$, let $\px_0 \px_1 \cdots \px_k$ denote a trajectory admitted by \eqref{eq:dynamics}. Note that $\px_1, \ldots, \px_k$ are all random variables.

Given an initial state $\px_0 \in X_0$ and a finite horizon of interest $K \in \mathbb{N}_0$, let $\traj = \px_0 \px_1 \cdots \px_k$ denote a trajectory produced by System~\eqref{eq:dynamics}.
Since each state except $\px_0$ in the trajectory is a random variable, a probability density over trajectories is simply the joint distribution $p(\traj) = p(\px_1, \ldots, \px_k | \px_0)$.
A trajectory is safe iff for all $k \in \{0, \ldots K\}$, $\px_k \in X_s$ denoted $\traj \in X_s$. Therefore, the probability of safety over all $K$-step trajectories of System~\eqref{eq:dynamics} from a given $\px_0 \in X_0$ is
% \begin{align}
    \edt{$P_{\safe}(\traj,X_\safe, \px_0) = \int \mathbf{1}(\traj \in X_s ) p(\traj) d\px_1 \cdots d\px_K,$}
% \end{align}
    %p_{\safe} = \prod_{k=1}^K P(\px_{k} \in X_s | \px_{k-1}) 
%where 
%\begin{equation}
%    P(\px_{k} \in X_s | \px_{k-1}) = \int_{\mathbb{R}^D} \mathbbm{1}(f(\px_{k-1}) + \pv \in X_s) q(\pv) d\pv
%\end{equation}
where $\mathbf{1}(\varphi) = 1$ iff predicate $\varphi$ is \emph{true}, otherwise $\mathbf{1}(\varphi) = 0$.

Finally, since $X_0$ is not necessarily a singleton, measures of safety probability must hold for all $\px_0 \in X_0$. %, hence we consider the following problem.
% We aim to find a \textit{safety certificate} $\delta_\safe$ such that 
Hence, we define the safety probability from initial set $X_0$ as:
%\begin{equation}
    $P_{\safe}(\vec{\px},X_\safe, X_0) = \inf_{\px_0 \in X_0} P_{\safe}(\vec{\px}, X_\safe, \px_0)$.
    % \geq \delta_{\safe}.
%\end{equation}

%\begin{definition}[Safety Certificate] \label{def:cert}
%    Given a safe set $X_s \subset X$, initial set $X_0$, time horizon $K \in \mathbb{N}_0$, and safety threshold $\delta_{\safe}$, ensure that
%    \begin{equation}
%        p_{\safe}(\px_0 \cdots \px_K) \geq \delta_{\safe} \; \forall \px_0 \in X_0.
%    \end{equation}
%\end{definition}

% \subsection{Stochastic Barrier Function}
% Due to the non-linear nature of $f$, obtaining safety certificates is particularly challenging. 
% \noindent
Our goal is to find a lower bound on $P_\safe$ to guarantee the safety of System~\eqref{eq:dynamics}.
To this end, we focus on stochastic barrier functions (SBFs). 

\begin{definition}[SBF] \label{def:sbf}
    A \emph{stochastic barrier function} (SBF) for System~\eqref{eq:dynamics} is a scalar-valued function $B : X \to \mathbb{R}_{\geq 0}$ such that, for scalars $\eta, \gamma \in [0,1]$,
    \begin{subequations}
        \label{eq: barrier constraints}
        \begin{align}
            &B(\px) \geq 0 &&\forall \px \in X, \label{eq:constraint1} \\
            &B(\px) \geq 1 &&\forall \px \in X_u, \label{eq:constraint2} \\
            &B(\px) \leq \eta &&\forall \px \in X_0, \label{eq:constraint3} \\
            &\expec{B(f(\px) + \pv)} - B(\px) \leq \gamma &&\forall \px \in X_s. \label{eq:constraint4}
        \end{align}
    \end{subequations}

\end{definition}

%\begin{theorem}[SFB Certificate \needcite{}\ml{cite}] \label{thm:sbfc}
%    If there exists a function $B$ that satisfies Constraints \eqref{eq:constraint1}-\eqref{eq:constraint4} in Def. \ref{def:sbf}, then for time-horizon $K$, 
%    \begin{equation} \label{eq:psafe}
%        P_{\safe}(\vec{\px},X_\safe, X_0) \geq 1 - \delta_{\safe},
%    \end{equation}
%    where $\delta_{\safe} = (\eta + K \gamma)$
%\end{theorem}

%
\begin{theorem}[\edt{SBF} Certificate \cite{santoyo2021barrier}] \label{thm:sbfc}
    If there exists \edt{a} SBF $B(\px)$ for System~\eqref{eq:dynamics} 
    that satisfies Constraints \eqref{eq:constraint1}-\eqref{eq:constraint4} 
    with scalars $\eta$ and $\gamma$, then, for horizon $K \in \mathbb{N}$,
    $P_{\safe}(\vec{\px},X_\safe, X_0) \geq \delta_\safe$, where
    \begin{equation} \label{eq:psafe}
        \delta_{\safe} = 1 - (\eta + K \gamma).
        \vspace{-1mm}
    \end{equation}
\end{theorem}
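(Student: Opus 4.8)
The plan is to recast the claim as an upper bound on the probability of being \emph{unsafe}, and to establish that bound through a stopping-time/supermartingale argument driven by Constraints~\eqref{eq:constraint1}--\eqref{eq:constraint4}. Concretely, fixing an arbitrary $\px_0 \in X_0$, I would first define the first-exit time $\tau = \min\{k \geq 0 : \px_k \notin X_s\}$ and note that a $K$-step trajectory is unsafe precisely on the event $\{\tau \leq K\}$, so that $P_\safe(\traj, X_\safe, \px_0) = 1 - P(\tau \leq K)$. The goal then reduces to showing $P(\tau \leq K) \leq \eta + K\gamma$.

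The core of the argument is to track the stopped process $B(\px_{k\wedge\tau})$ and show it behaves like a supermartingale with controlled drift. The key step is the one-step inequality $\expec{B(\px_{(k+1)\wedge\tau})} \leq \expec{B(\px_{k\wedge\tau})} + \gamma$. I would obtain it by conditioning on $\{\tau \leq k\}$ versus $\{\tau > k\}$: on the former the stopped state is frozen at $\px_\tau$, so the corresponding term is unchanged; on the latter we have $\px_k \in X_s$, so Constraint~\eqref{eq:constraint4} gives $\expec{B(\px_{k+1}) \mid \px_k} \leq B(\px_k) + \gamma$, contributing at most an extra $\gamma\, P(\tau > k) \leq \gamma$. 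Telescoping this inequality from $k=0$ to $k=K-1$ yields $\expec{B(\px_{K\wedge\tau})} \leq \expec{B(\px_0)} + K\gamma$, and Constraint~\eqref{eq:constraint3} bounds $\expec{B(\px_0)} \leq \eta$ since $\px_0 \in X_0$, giving $\expec{B(\px_{K\wedge\tau})} \leq \eta + K\gamma$.

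To close, I would apply a Markov-type lower bound on this same expectation. Since $B \geq 0$ everywhere by Constraint~\eqref{eq:constraint1}, and on the unsafe event $\{\tau \leq K\}$ we have $\px_{K\wedge\tau} = \px_\tau \in X_u$, where Constraint~\eqref{eq:constraint2} forces $B(\px_\tau) \geq 1$, it follows that $\expec{B(\px_{K\wedge\tau})} \geq P(\tau \leq K)$. Chaining the two bounds gives $P(\tau \leq K) \leq \eta + K\gamma$, hence $P_\safe(\traj, X_\safe, \px_0) \geq 1 - (\eta + K\gamma) = \delta_\safe$. Because this bound is uniform in $\px_0 \in X_0$ (the right-hand side depends on the starting state only through $\eta$, which dominates $B$ on all of $X_0$), taking the infimum over $\px_0$ preserves it and yields $P_\safe(\traj, X_\safe, X_0) \geq \delta_\safe$.

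The main obstacle I anticipate is the careful bookkeeping at the stopping time: one must use the stopped process $\px_{k\wedge\tau}$ rather than $\px_k$ itself, because Constraint~\eqref{eq:constraint4} only controls the drift inside $X_s$ and says nothing once the trajectory has entered $X_u$. Handling the two regimes (before and after $\tau$) cleanly --- and ensuring the drift $\gamma$ is only charged while $\{\tau > k\}$, which is what keeps the accumulated penalty at $K\gamma$ rather than something larger --- is the delicate part; the remaining non-negativity and initial-condition bounds follow directly from the constraints.
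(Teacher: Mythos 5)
The paper offers no proof of its own here---the theorem is quoted directly from \cite{santoyo2021barrier}---and your stopped-process argument (first-exit time $\tau$, the one-step drift inequality $\expec{B(\px_{(k+1)\wedge\tau})} \leq \expec{B(\px_{k\wedge\tau})} + \gamma$ charged only on $\{\tau > k\}$, telescoping against $\expec{B(\px_0)} \leq \eta$, then the Markov-type bound using $B \geq 0$ globally and $B \geq 1$ on the unsafe event, with uniformity over $X_0$ coming from $\eta$) is exactly the standard proof in that reference, correctly executed. The one caveat, inherited from the theorem statement rather than a flaw in your reasoning, is that $\px_\tau \notin X_s$ need not imply $\px_\tau \in X_u = X \setminus X_s$ when the state exits the compact set $X$ entirely, so Constraint~\eqref{eq:constraint2} covers the stopped state only if leaving $X$ is itself rendered unsafe---which is precisely why the paper's experiments place boundary unsafe rectangles around $X$.
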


Theorem~\ref{thm:sbfc} describes the safety certificate admitted by a given SBF. 
%However, searching for an SBF that admits a non-trivial probability of safety is challenging. 
%That is precisely the problem we consider in this work as stated below.
In this work, we consider the problem of \textit{synthesizing} a SBF, as stated below.

\begin{problem}[SBF Synthesis]
    \label{prob}
    Given stochastic System~\eqref{eq:dynamics} with a safe set $X_s \subset X$, initial set $X_0 \subseteq X_\safe$, time horizon $K \in \mathbb{N}_0$, and safety threshold $p_{\safe}$, find a SBF $B(\px)$ 
    % that satisfies constraints \eqref{eq:constraint1} - \eqref{eq:constraint4} and admits a certificate $\delta_\safe \geq p_\safe$.
    that guarantees $P_\safe(\vec{x}, X_\safe,X_0) \geq p_\safe$.
\end{problem}

\textbf{Approach Overview. }
Synthesis of a SBF (Problem~\ref{prob}) is a functional optimization problem, where $B(\px)$, $\eta$, and $\gamma$ are decision variables with the objective of maximizing \eqref{eq:psafe}. Obtaining a witness of $B(\px)$ with $\eta$ and $\gamma$ such that \edt{$\delta_\safe >= p_\safe$} solves Problem~\ref{prob}.
\edt{However, generating an informative certificate requires the template for $B(\px)$ to approximate aribitrary functions well and admit tractable regional bounds required by constraints \eqref{eq:constraint1}-\eqref{eq:constraint4}.}

%However, 
%this poses a challenge because determining extrema of an arbitrary
%% determining the global minimum (or maximum) of arbitrary 
%$B(\px)$ (or transformations thereof) over the regions of interest (required by constraints \eqref{eq:constraint1}-\eqref{eq:constraint4}) is in general an NP-hard problem \cite{murty1985some}. \edt{Thus, to solve Problem~\ref{prob} we must choose a functional template} that allows for efficient computation of regional bounds. Furthermore, achieving a non-trivial $\delta_\safe$ requires $B(\px)$ to be able to closely approximate arbitrary functions. 

In this work, we restrict $B(\px)$ to the class of multivariate polynomials. To be precise, we distinguish between polynomials defined by their \textit{maximal} degree (seen in Bernstein formulations), i.e.,
\vspace{-3mm}
\begin{equation} \label{eq:B bernstein}
    B(\px) = \sum_{l_1 = 0}^m \cdots \sum_{l_D = 0}^m b_{l_1 \ldots l_D} \prod_{j=1}^D x_j^{l_j},
\end{equation}
with coefficients $b_{l_1 \ldots l_D} \in \reals$,
and those defined by their \textit{cumulative} degree (seen in SoS formulations), i.e.,
\vspace{-1mm}
\begin{equation} \label{eq:B SoS}
    B(\px) = \sum_{|\vec{l}| \leq m} b_{l_1 \ldots l_D} \prod_{j=1}^D x_j^{l_j},
    \vspace{-1mm}
\end{equation} 
where $\vec{l} = [l_1, \ldots, l_D]$ and $|\vec{l}| = \sum_d l_d$. The multi-sum, e.g., in ~\eqref{eq:B bernstein}, is denoted $\sum_{\vec{l} = \vec{0}}^{\vec{m}}$ where $\vec{m} = [m, ..., m]$.

In the following sections, we discuss two polynomial relaxation methods that allow us to bound the extrema of $B(\px)$: Sum-of-Squares (SoS) programming, and our alternative approach using Bernstein polynomials. We also provide a detailed comparison analysis of these methods.

\textbf{Notations. } %We denote the set of all polynomials of maximum degree $m$ as $\polysd{m}$.
For simplicity, let $\bvec = [b_{0\ldots 0}, \ldots, b_{m \ldots m}]^\top$ denote an organized vectorization of the coefficients of a polynomial with coefficients $b_{l_1 \ldots l_D}$. Let $\mathbf{z}(\px) = [1, x_1, \ldots, x_1^m \cdots x_D^m]^\top$ be the vector containing all unique monomials such that any polynomial can be written as $a(\px) = \bvec^\top \mathbf{z}(\px) $. The polynomial degree is denoted $deg(a(\px))$.

%\begin{remark}
%    Unlike the basis described in \eqref{eq:B}, \textit{sum of squares} \needcite{} approaches define the degree of a polynomial basis as the sum of exponents, i.e., $m = \max_{l_1 \cdots l_D} \sum_j^D l_j$, which we will refer to as the \textit{diagonal-degree}. When appropriate, we will distinguish the degree definition used in \eqref{eq:B} as \textit{square-degree}, i.e. $m = \max_{l_1 \cdots l_D} \big[ \max_{j \in \{1, \ldots, D\}} l_j \big]$.
%\end{remark}

%% Lagrangian example figure

\begin{figure*}[h!]
    \centering

    % Group 1: First two images
    \begin{subfigure}{0.48\textwidth}
        \centering
        % First image
        \caption*{\footnotesize $\lambda(\px) = (x - 0.5)^2$}
        \begin{subfigure}[t]{0.48\textwidth}
            \centering
            \includegraphics[width=\linewidth]{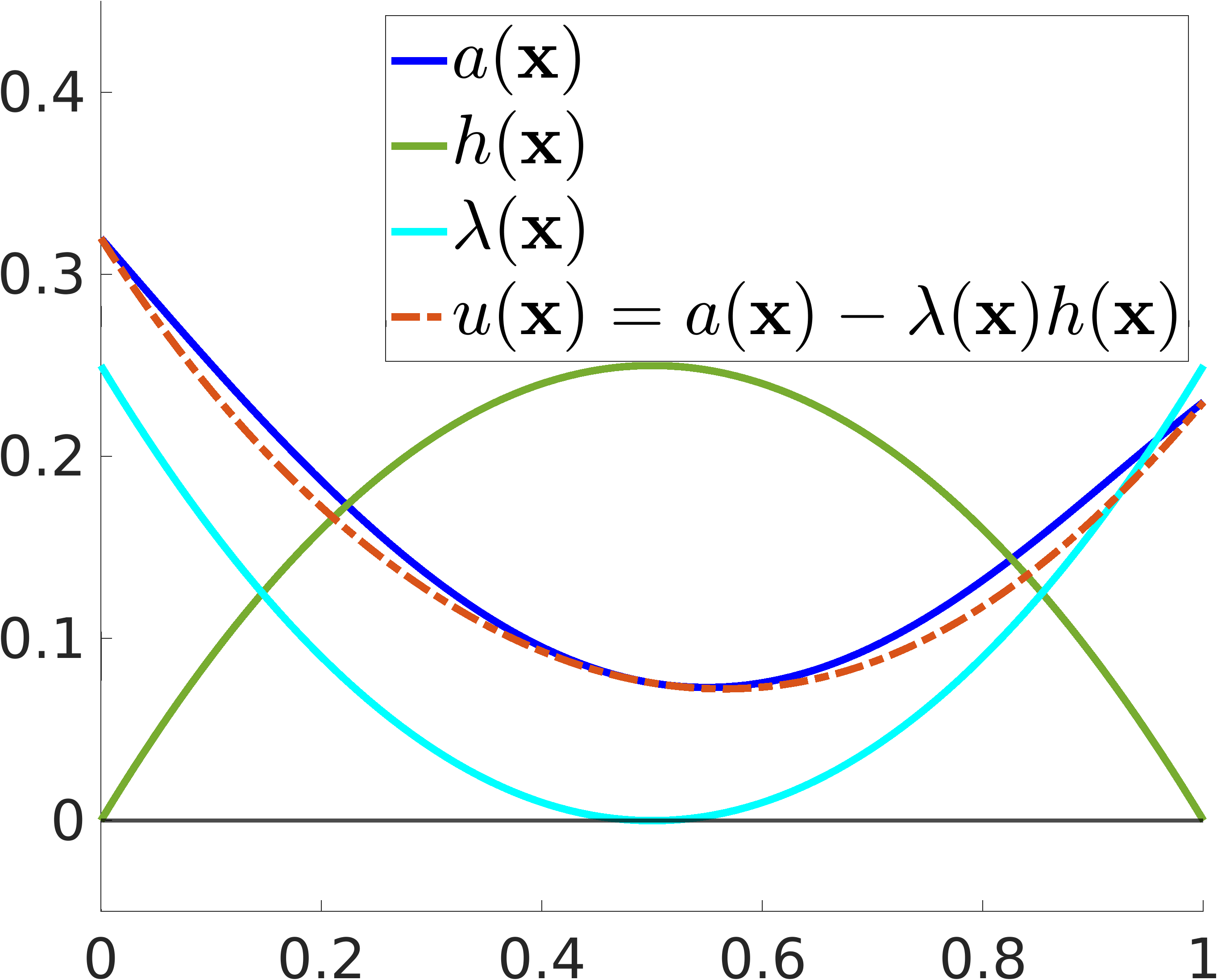}
            \caption{$u(\px)$ over $S$ ($deg(\lambda) = 2$)}
            \label{fig:lagr_ex_d2_set}
        \end{subfigure}
        \hfill
        % Second image
        \begin{subfigure}[t]{0.48\textwidth}
            \centering
            \includegraphics[width=\linewidth]{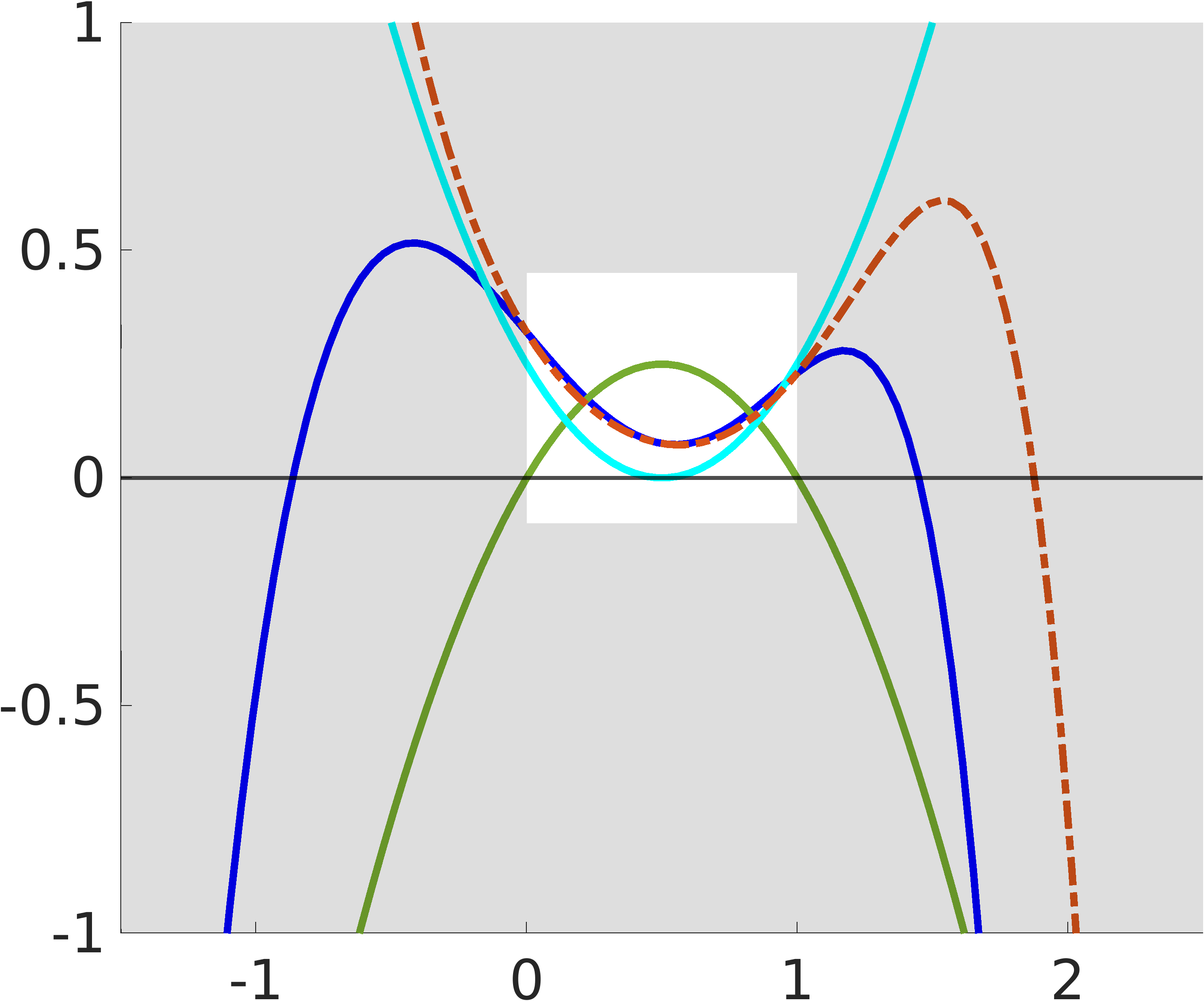}
            \caption{$u(\px)$ over $\reals$ ($deg(\lambda) = 2$)}
            \label{fig:lagr_ex_d2_global}
        \end{subfigure}
        \label{fig:lagr_ex_d2}
    \end{subfigure}
    \hfill
    % Group 2: Last two images
    \begin{subfigure}{0.48\textwidth}
        \centering
        % Third image
        \caption*{\footnotesize $\lambda(\px) = (x - 0.5)^6$}
        \begin{subfigure}[t]{0.48\textwidth}
            \centering
            \includegraphics[width=\linewidth]{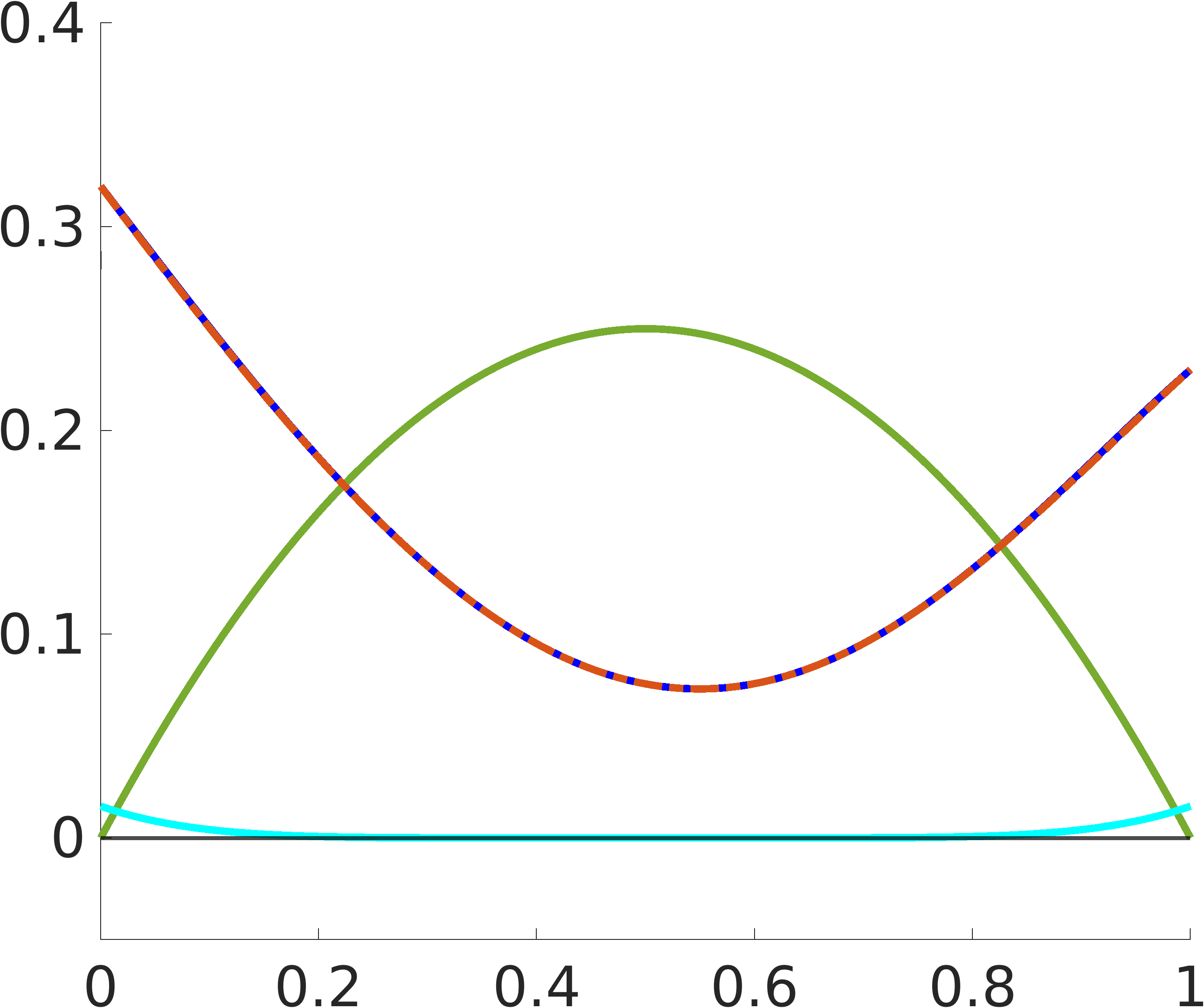}
            \caption{$u(\px)$ over $S$ ($deg(\lambda) = 6$)}
            \label{fig:lagr_ex_d6_set}
        \end{subfigure}
        \hfill
        % Fourth image
        \begin{subfigure}[t]{0.48\textwidth}
            \centering
            \includegraphics[width=\linewidth]{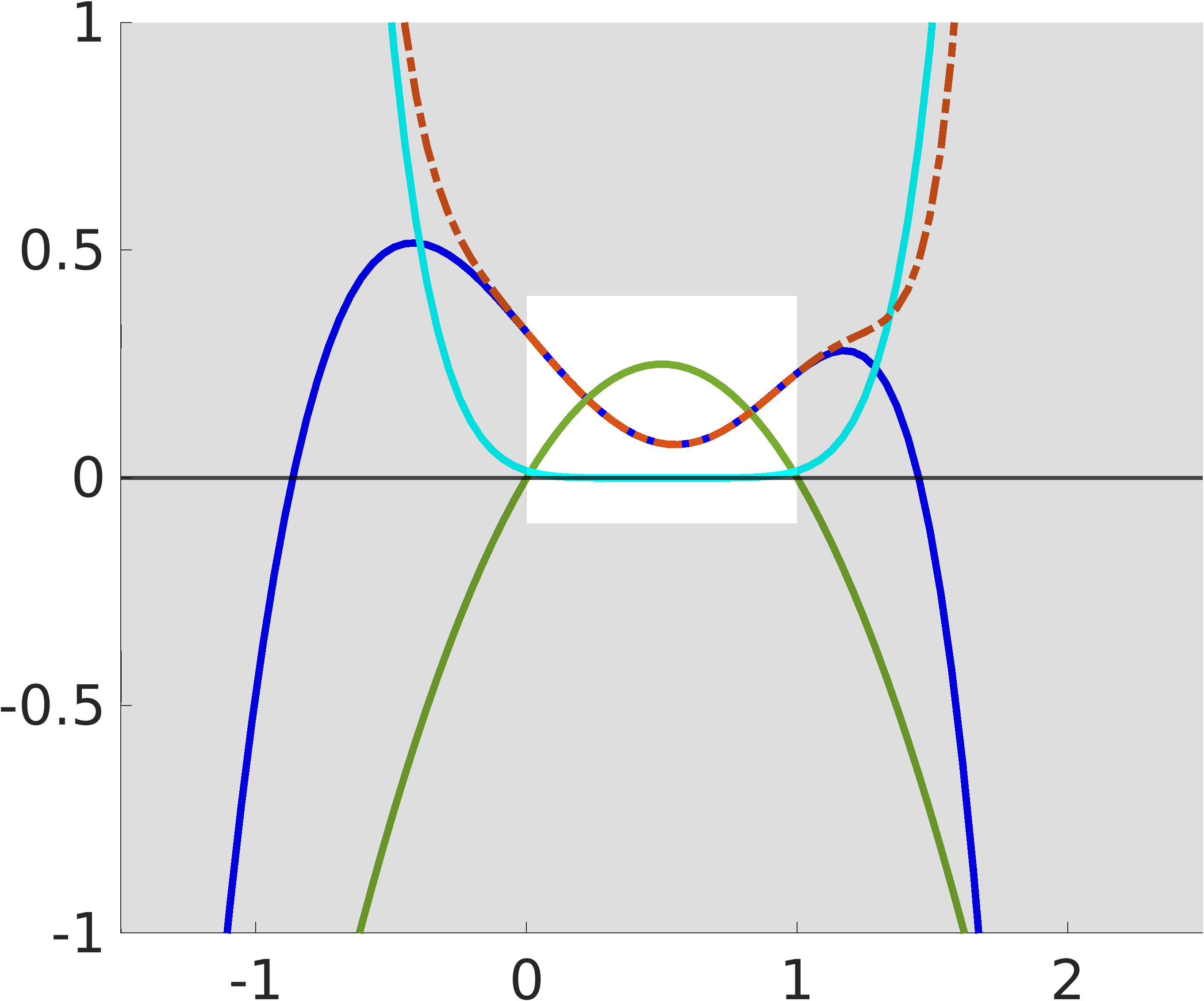}
            \caption{$u(\px)$ over $\reals$ ($deg(\lambda) = 6$)}
            \label{fig:lagr_ex_d6_global}
        \end{subfigure}
        \label{fig:lagr_ex_d6}
    \end{subfigure}

    % Overall figure caption
    \caption{\textit{Lagrangian Multiplier Example}: Using Theorem \ref{thm:putinar}, we aim to assert the non-negativity of a degree 6 polynomial $p(\px)$ over $S = [0,1]$ described by $h(\px) = x^2 - x$. Figure (a, b) show results for a degree 2 Lagrange multiplier; (c, d) show results using degree 6. }
    \label{fig:lagr_ex}
    \vspace{-4mm}
\end{figure*}

\section{Preliminaries: SoS SBFs}
\label{sec:prelim}
Here, we provide a brief review of the Sum-of-Squares (SoS) formulation for SBFs shown in \cite{santoyo2021barrier}. Let $\sospolys$ define the set of all \textit{Sum-of-Squares polynomials} such that
%\begin{multline}
    $\sospolys = \big\{ q(\px)\in \polys \mid q(\px) = \sum_{i=1}^N a_i^2(\px), a_i(\px) \in \polys, \; N\in \mathbb{N}  \big\}$.
%\end{multline}
It is straightforward to observe that all SoS polynomials are non-negative over $\reals^D$. 
%Let $z(\px)$ denote the vector of all unique monomials of cumulative degree $\leq m$ of length $M$. 
One can equivalently express $\sospolys$ as the set of all $q(\px) = \pz^\top\!(\px) Q \pz(\px)$ such that $Q \in \reals^{M \times M}$ 
is a \textit{positive semi-definite} (PSD) matrix, denoted $Q \succeq 0$.
Hence, given a polynomial $a(\px)$, if we can find an SoS \textit{decomposition}, i.e., a PSD matrix $Q$ such that $a(\px) = \pz^\top(\px) Q \pz(\px)$, then we can ensure that $a(\px) \geq 0$ for all $\px \in \reals^D$. 

%\begin{remark}
%    While the existence of an SoS decomposition of $a(\px)$ implies $a(\px) \geq 0$, the converse is not necessarily true. The 2D Motzkin Polynomial (degree 6) \cite{motzkin1967arithmetic} is one such counterexample, i.e., a polynomial that is globally non-negative with no SoS decomposition. Therefore, we recognize that for a fixed degree $m$ ($\geq 6$), SoS polynomials are strictly less expressive than non-negative polynomials.
%\end{remark}

Recall, the constraints in Def. \ref{def:sbf} require bounds over a \textit{semi-algebraic set} rather than all of $\reals^D$. 
% Putinar's Positivstellensatz \cite{nie2007complexity} 
The following \edt{proposition}
allows \eqref{eq: barrier constraints} to be expressed as as non-negativity constraints.

\begin{proposition} [\!\!\cite{nie2007complexity}] \label{thm:putinar}
    Let $S = \{\px \mid \sasvec \geq 0\}$ be a semi-algebraic set. Then, a polynomial $a(\px) \in \polys$ is non-negative on $S$ if
        $a(\px) = \lambda_0(\px) + \sum_{1\leq i \leq r} \lambda_i(\px) h_i(\px)$
    % A polynomial $a(\px) \in \polys$ is non-negative on the semi-algebraic set $S = \{\px \mid \sasvec \geq 0\}$ if
    %\begin{equation}
    %    a(\px) = \lambda_0(\px) + \sum_{1\leq i \leq r} \lambda_i(\px) h_i(\px)
    %\end{equation}
    for some $\lambda_0(\px), \ldots, \lambda_r(\px) \in \sospolys$ referred to as \emph{Lagrange multipliers}.
\end{proposition}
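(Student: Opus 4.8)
The statement to prove is the \emph{sufficiency} (``if'') direction of Putinar's Positivstellensatz, which admits a short pointwise argument; the substantive and genuinely difficult content referenced in \cite{nie2007complexity} lies in the \emph{converse} existence result (that strict positivity on $S$, under an Archimedean condition, forces such a decomposition to exist), which is not needed here. So the plan is purely to verify that the assumed decomposition certifies non-negativity.

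The plan is to fix an arbitrary point $\px \in S$ and show $a(\px) \geq 0$ by evaluating the given identity at $\px$. First I would unpack membership $\px \in S$: by the definition of the semi-algebraic set $S = \{\px \mid \sasvec \geq 0\}$, every component satisfies $h_i(\px) \geq 0$ for $i = 1, \ldots, r$. Second, I would invoke the defining property of $\sospolys$: since each Lagrange multiplier $\lambda_i(\px) = \sum_j a_{ij}^2(\px)$ is a sum of squares of real polynomials, it is non-negative at \emph{every} point of $\reals^D$, and in particular $\lambda_i(\px) \geq 0$ at our chosen $\px$ for all $i = 0, 1, \ldots, r$.

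Combining these two facts, each product $\lambda_i(\px) h_i(\px)$ with $1 \leq i \leq r$ is a product of two non-negative reals and is therefore non-negative, and likewise $\lambda_0(\px) \geq 0$. Evaluating the assumed identity $a(\px) = \lambda_0(\px) + \sum_{1 \leq i \leq r} \lambda_i(\px) h_i(\px)$ then exhibits $a(\px)$ as a sum of non-negative terms, whence $a(\px) \geq 0$. Since $\px \in S$ was arbitrary, $a$ is non-negative on all of $S$, as claimed.

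I do not expect a real obstacle in this direction; the proof is essentially a one-line evaluation. The only points requiring care are keeping the logical direction straight (we establish decomposition $\Rightarrow$ non-negativity, and must not conflate this with the much deeper converse) and emphasizing that the SoS multipliers are non-negative \emph{globally} on $\reals^D$ — this global sign is precisely what lets the argument localize correctly to $S$, since the sign constraints $h_i(\px) \geq 0$ are the only additional information available at points of $S$.
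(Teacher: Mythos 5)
Your proof is correct. The paper itself states this proposition without proof, deferring to the cited reference, and your pointwise argument is exactly the standard (and complete) justification of the stated ``if'' direction: each $\lambda_i$ is globally non-negative as a sum of squares, each $h_i(\px) \geq 0$ for $\px \in S$ by definition of the semi-algebraic set, so the decomposition exhibits $a(\px)$ as a sum of non-negative terms at every point of $S$ --- and you are right to flag that the deep content of \cite{nie2007complexity} lies in the converse (Archimedean) existence result, which is not claimed here.
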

%\begin{proof}
%    Polynomials that are non-negative over a region are closed under multiplication and addition. $\lambda(\px) \geq 0$ everywhere and (by definition) $h_i(\px) \geq 0$ on $S$.
%\end{proof}
\noindent One can assert the non-negativity of $a(\px)$ on $S$ by finding a decomposition in the form described in Theorem~\ref{thm:putinar} \edt{that is SoS}. However, for our purposes it is often more useful to examine a relaxed \edt{decomposition}, as stated below.
\begin{corollary} \label{cor:relax_putinar}
    Let $S = \{\px \mid \sasvec \geq 0\}$ be a semi algebraic set. 
    Consider an ``error'' polynomial
        $u(\px) = a(\px) - \big( \lambda_0(\px) + \sum_{1\leq i \leq r} \lambda_i(\px) h_i(\px) \big)$ 
    %\begin{equation} \label{eq:putinar_lb}
    %    u(\px) = a(\px) - \Big( \lambda_0(\px) + \sum_{1\leq i \leq r} \lambda_i(\px) h_i(\px) \Big) 
    %\end{equation}
    for some $\lambda_0(\px), \ldots, \lambda_r(\px) \in \sospolys$.
    Then, a polynomial $a(\px) \in \polys$ is non-negative on $S$ if $u(\px) \geq 0$ for all $\px \in \reals^D$.
\end{corollary}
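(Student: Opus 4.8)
The plan is to reduce the claim to the elementary ``easy direction'' of Putinar's representation (Proposition~\ref{thm:putinar}), but carrying along the extra error term $u(\px)$. The starting point is to rearrange the definition of $u(\px)$ into
\begin{equation*}
    a(\px) = u(\px) + \lambda_0(\px) + \sum_{1 \leq i \leq r} \lambda_i(\px) h_i(\px),
\end{equation*}
so that $a(\px)$ is expressed as a single sum of terms whose signs I can control pointwise.

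Next I would fix an arbitrary $\px \in S$ and argue that each summand is non-negative at that point. By hypothesis $u(\px) \geq 0$ holds for \emph{all} $\px \in \reals^D$, hence in particular for the chosen $\px \in S$. Each Lagrange multiplier satisfies $\lambda_i(\px) \in \sospolys$, and as observed immediately before Proposition~\ref{thm:putinar} every sum-of-squares polynomial is non-negative on all of $\reals^D$; thus $\lambda_0(\px) \geq 0$ and $\lambda_i(\px) \geq 0$ for each $i$. Finally, because $\px \in S = \{\px \mid \sasvec \geq 0\}$, each defining polynomial satisfies $h_i(\px) \geq 0$, so every product $\lambda_i(\px) h_i(\px)$ is a product of two non-negative reals and is therefore non-negative.

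Summing these observations, $a(\px)$ is a sum of non-negative quantities, so $a(\px) \geq 0$; since $\px \in S$ was arbitrary, $a$ is non-negative on $S$, as claimed. There is essentially no technical obstacle here—the argument is a one-line sign check. The only point worth emphasizing is \emph{why} the relaxation is sound: the exact Putinar identity $a = \lambda_0 + \sum_i \lambda_i h_i$ forces $u \equiv 0$, whereas the corollary only requires the one-sided condition $u(\px) \geq 0$ on $\reals^D$. Since this merely adds a globally non-negative term to the Putinar combination, the lower bound on $S$ is preserved, and no exact cancellation of $u$ is needed—only its sign matters.
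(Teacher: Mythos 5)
Your proof is correct and is precisely the argument the paper leaves implicit (the corollary is stated without proof, as immediate from the same sign check underlying Proposition~\ref{thm:putinar}): rearranging to $a(\px) = u(\px) + \lambda_0(\px) + \sum_i \lambda_i(\px) h_i(\px)$ and noting that on $S$ each term is non-negative --- $u$ by hypothesis, the $\lambda_i$ because SoS polynomials are globally non-negative, and the products $\lambda_i h_i$ because $h_i(\px) \geq 0$ on $S$. Your closing remark is also the right one to emphasize: only the global non-negativity of $u$ is needed, not an exact Putinar representation, which is exactly what makes the relaxation sound.
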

Corollary~\ref{cor:relax_putinar} allows one to use ``imperfect'' \textit{lower-bound} decompositions of $a(\px)$ in the form described in Theorem~\ref{thm:putinar} to still \edt{assert non-negativity over $S$}. 
\edt{Using Corollary \ref{cor:relax_putinar}, a certificate of non-negativity over all of $\reals^D$ can be narrowed to a bounded semi-algebraic set, allowing the constraints of Def. \ref{def:sbf} to be formulated for the polynomial $B(\px)$ in \eqref{eq:B SoS} as}:
\begin{subequations}
    \begin{align}
        &B(\px) - \sum_{i} \lambda^X_i(\px) h^X_i(\px) &&\in \sospolys, \label{eq:sos1}\\
        &B(\px)  - 1 - \sum_{i} \lambda^{X_u}_i(\px) h^{X_u}_i(\px) &&\in \sospolys, \label{eq:sos2}\\
        -&B(\px) + \eta - \sum_{i} \lambda^{X_0}_i(\px) h^{X_0}_i(\px) &&\in \sospolys, \label{eq:sos3}\\
        -&\expec{B(f(\px) + \pv)} + B(\px) + \gamma &&\notag \\ 
        &\hspace{2.8cm} - \sum_{i} \lambda^{X_s}_i(\px) h^{X_s}_i(\px) &&\in \sospolys, \label{eq:sos4}
    \end{align}
\end{subequations}
where $\lambda^S_i(\px)$ represents the corresponding Lagrange multiplier for the polynomial $h^S_i(\px)$ defining set $S$. These constraints define an SoS program 
that can be solved using SDP with an objective function of minimizing $\eta + K \gamma$ \cite{santoyo2021barrier,mazouz2022safety}. SDP is a convex program with worst-case complexity that is polynomial in the number of constraints and decision 
variables~\cite{vandenberghe1996semidefinite}.

\subsection{Understanding Lagrange Multipliers} \label{sec:lagr}
%Given a degree of $B(\px)$
%in the form of \eqref{eq:B SoS}
%, constraints \eqref{eq:sos1}-\eqref{eq:sos4} allow one to encode the synthesis of coefficients $\bvec$ as an SoS program \needcite{}
%\ml{we just said this above}
%. 
The Lagrange
multipliers in \eqref{eq:sos1}-\eqref{eq:sos4} are generally not known \textit{a priori}, and must be included as additional functionals in the optimization problem. Like $B(\px)$, we choose the maximum degree of each $\lambda_i(\px)$. This section aims to provide an intuitive and graphical perspective on the implications of the choice of $deg(\lambda)$ relative to $deg(B(\px))$.

Fig. \ref{fig:lagr_ex} illustrates an example of when Corollary \ref{cor:relax_putinar} cannot be used to certify the non-negativity over any semi-algebraic set if the degree of Lagrange multipliers is not large enough.
In this example, we aim to use $u(\px)$ to certify $a(\px) \geq 0$ (degree $6$) on $S = [0, 1]$ \edt{defined by $h(\px) = x^2 - x$}. In this case, $a(\px)$ is clearly non-negative on $[0, 1]$. On the left, $u(\px)$ is created with a quadratic Lagrange multiplier. As can be seen, in Fig. \ref{fig:lagr_ex_d2_global}, $u(\px) \not\geq 0$, and hence cannot certify that 
of $a(\px)$ is non-negative over $S$.
However, with $m_{\lambda} = 6$ (such that $m_{\lambda h} = 8$), not only is $u(\px) \geq 0$, but it achieves a much tighter lower bound of $a(\px)$ over $S$.
This suggests that when $m_{\lambda h} > m$, the existence of a valid $u(\px)$ depends on how close the minimum of $a(\px)$ on $S$ is to zero, and still benefits from raising the degree of the Lagrange multipliers. %We further elaborate on this property in Sec.~\ref{sec:convergence}.
This example illustrates that using low-degree Lagrange multipliers can limit the expressiveness of $B(\px)$. However, increasing $m_\lambda$ \edt{increases} the number of decision variables in the SDP.

\section{Bernstein Stochastic Barrier Functions}
\label{sec:Bern SBF}
%To synthesize a SBF that adheres to the constraints described in Def. \ref{def:sbf}, we leverage Bernstein polynomials in such a way to bound $B(\px)$ over hyperrectangular intervals. 

\edt{We introduce a novel \textit{alternative} method of formulating the \eqref{eq:constraint1}-\eqref{eq:constraint4}} such that the synthesis results in a \textit{linear program} (LP) instead of a SDP. This approach leverages Bernstein polynomials to bound $B(\px)$ over hyperrectangular intervals. 

\subsection{Bernstein Polynomials}
In order to constrain $B(\px)$, we must first recall the properties of Bernstein polynomials necessary to bound a multivariate polynomial on an interval.
\begin{definition}[Bernstein polynomial]
    A \emph{Bernstein Polynomial} is a maximal-degree $m$ polynomial 
    % \begin{equation}
    %     \alpha(\px) = \sum_{\vec{l} = \vec{0}}^{\vec{m}} \beta_{l_i \ldots l_D}^{m} \phi_{\vec{l}}^m(\px)
    %     %\alpha(\px) = \sum_{l_1 = 0}^{m} \ldots \sum_{l_D = 0}^{m} \beta_{l_i \ldots l_D}^{m} \phi_{\vec{l}}^m(\px)
    % \end{equation}
    $\alpha(\px) = \sum_{\vec{l} = \vec{0}}^{\vec{m}} \beta_{l_i \ldots l_D}^{m} \phi_{\vec{l}}^m(\px),$
    where
    % \begin{align*}
        $\phi_l^m(\px) = \prod_{j=1}^D \binom{m}{l_j} x_j^{l_j}(1 - x_j)^{m - l_j} $
    % \end{align*} 
    are Bernstein basis polynomials and $\beta_{l_i \ldots l_D}^{m}$ are the corresponding coefficients.
\end{definition}

Let $a(\px)$ be an arbitrary degree-$m$ polynomial in the power basis, e.g., in the form of~\eqref{eq:B bernstein} or~\eqref{eq:B SoS}, 
with coefficients $b_{0\ldots 0}, \ldots, b_{m \ldots m}$. Additionally, let $\vec{\phi}^m$ be the organized vectorization of all degree $m$ Bernstein basis polynomials $\phi_l^m(\px)$. \edt{This section henceforth assumes maximal degree; however, Remark~\ref{rem:deg_conversion} (below) shows how one can adapt the Bernstein approach to a cumulative degree formulation.}
Then, for some raised degree $m^+ \geq m$, $a(\px) = \bvec^\top \pz(\px)$ can be equivalently represented on the Bernstein basis as $\alpha(\px) = \betavec^\top \vec{\phi}^{m^+}(\px)$.
The conversion from the power basis to the Bernstein basis coefficients is given by
% \vspace{-2mm}
\begin{align} \label{eq:tf_fwd}
    \beta_{l_1 \ldots l_D}^{m^+} = \sum_{\vec{i} = \vec{0}}^{\vec{l}} \prod_{j = 1}^D \frac{\binom{l_j}{i_j}}{\binom{m^+}{i_j}} b_{i_1 \ldots i_D}.
\end{align}
%The inverse conversion yields a power basis polynomial of the same degree as the Bernstein polynomial with coefficients
%\begin{align} \label{eq:tf_rev}
%    &b_{l_1 \ldots l_D} = \notag \\
%    &\sum_{i_1 = 0}^{l_1} \cdots \sum_{i_D = 0}^{l_D} \prod_{j = 1}^D \binom{m^+ - i_j}{m^+ - l_j} \binom{m^+}{i_j} (-1)^{l_j - i_j} \beta_{i_1, \ldots, i_D}
%\end{align}
This conversion is linear, and therefore, by vectorizing the coefficients of $a(\px)$ and $\alpha(\px)$ by $\bvec$ and $\betavec$ respectively, we can represent the forward conversion in \eqref{eq:tf_fwd} using a matrix $\Phi$ such that $\betavec = \Phi_m^{m^+} \bvec$.
%\begin{equation}
%    \betavec = \Phi_m^{m^+} \bvec
%\end{equation}
%Note that $\Phi_m^{m^+}$ is square if $m^+ = m$ and upper-triangular with strictly positive values on and above the diagonal. Therefore $\Phi_m^{m}$ is guaranteed to be invertible (denoted $\Phi^{-1}$) such that $\bvec = \Phi^{-1} \betavec$. Nonetheless, constructing $\Phi^{-1}$ manually using \eqref{eq:tf_rev} is quicker than inverting the matrix.
Work \cite{garloff1985convergent} introduces a powerful theorem that allows one to lower bound $a(\px)$ over $[0, 1]^D$.

\begin{theorem} [\!{\cite[Theorem 3]{garloff1985convergent}}] \label{thm:positivity}
    Let $\underline{a} = \inf_{\px \in [0, 1]^d} a(\px)$ and $\underline{\beta} = \min_{l} \beta_l^{m^+} $. Then, for each $m^+ \geq m$, we have
    % \begin{align}
        $\underline{\beta} \leq \underline{a} \leq \underline{\beta} + \delta \frac{m^+ - 1}{(m^+)^2},$
    % \end{align}
    where $\delta \geq 0$, defined with respect to $\bvec$. Furthermore, $\underline{\beta}$ and $\underline{a}$ converge monotonically as $m^+ \rightarrow \infty$.
\end{theorem}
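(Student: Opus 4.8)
The plan is to prove the two inequalities separately and then derive the monotone convergence from a degree-elevation argument combined with a squeeze. Everything takes place on the box $[0,1]^D$, and the whole argument rests on the two defining features of the Bernstein basis, both immediate from the tensor-product binomial form of $\phi_{\vec l}^{m^+}(\px)$: each basis polynomial is non-negative there, $\phi_{\vec l}^{m^+}(\px) \ge 0$, and they form a partition of unity, $\sum_{\vec l} \phi_{\vec l}^{m^+}(\px) = 1$. For the lower bound $\underline{\beta} \le \underline{a}$, I would write $a(\px) = \betavec^\top \vec{\phi}^{m^+}(\px) = \sum_{\vec l} \beta_{\vec l}^{m^+}\phi_{\vec l}^{m^+}(\px)$ and bound each coefficient below by $\underline{\beta} = \min_{\vec l}\beta_{\vec l}^{m^+}$; non-negativity of the basis then gives $a(\px) \ge \underline{\beta}\sum_{\vec l}\phi_{\vec l}^{m^+}(\px) = \underline{\beta}$ pointwise on the box, so taking the infimum over $\px$ yields $\underline{a} \ge \underline{\beta}$. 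This is the easy half and uses only the partition-of-unity identity.

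For the upper bound, the idea is to exhibit a single point of the domain at which $a$ is provably close to $\underline{\beta}$. Let $\vec l^\ast$ attain the minimum coefficient and consider the grid node $\px^\ast = \vec l^\ast/m^+ \in [0,1]^D$. Since $\underline{a} \le a(\px^\ast)$ trivially, it suffices to bound the discrepancy $a(\px^\ast) - \beta_{\vec l^\ast}^{m^+}$ between the polynomial value at a node and the Bernstein coefficient sitting over that node. Using the closed-form conversion \eqref{eq:tf_fwd}, this discrepancy can be written explicitly as $\sum_{\vec i} b_{i_1\ldots i_D}\big[\prod_j \binom{l_j^\ast}{i_j}/\binom{m^+}{i_j} - \prod_j (l_j^\ast/m^+)^{i_j}\big]$; comparing the falling-factorial ratios $\binom{l}{i}/\binom{m^+}{i}$ against the monomials $(l/m^+)^i$ shows that every bracket is $O(1/m^+)$, with the leading contribution from the second-order ($|\vec i|=2$) terms. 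Collecting these produces a bound of the form $\delta\,\tfrac{m^+-1}{(m^+)^2}$, where $\delta \ge 0$ aggregates the relevant second-order power-basis coefficients of $a$ and is therefore a fixed quantity determined by $\bvec$ alone. This quantitative estimate is the technical heart of the proof and the step I expect to be the main obstacle: pinning down the exact rate $\tfrac{m^+-1}{(m^+)^2}$, rather than a crude $O(1/m^+)$, requires carefully tracking the gap between $\binom{l}{i}/\binom{m^+}{i}$ and $(l/m^+)^i$ and checking that the higher-order brackets do not degrade the rate.

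Finally, for monotonicity and convergence I would invoke degree elevation. Raising $m^+$ to $m^+\!+\!1$ rewrites each new Bernstein coefficient as a convex combination of adjacent old coefficients (coordinate by coordinate), so the minimum coefficient cannot decrease; hence $\underline{\beta}$ is nondecreasing in $m^+$. Combined with the sandwich $\underline{\beta} \le \underline{a} \le \underline{\beta} + \delta\,\tfrac{m^+-1}{(m^+)^2}$ and the fact that the error term vanishes as $m^+ \to \infty$, a squeeze gives $\underline{\beta} \nearrow \underline{a}$ monotonically, which completes the argument.
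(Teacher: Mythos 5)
This theorem is imported by the paper verbatim from the literature (it is cited as Theorem 3 of Garloff's 1985 work) and the paper itself gives no proof, so there is no in-paper argument to compare against; your proposal should instead be measured against the classical proof, and it does in fact reconstruct that proof's structure faithfully. The lower bound via non-negativity plus partition of unity of the basis $\phi_{\vec{l}}^{m^+}$ is exactly the standard argument and is complete as you state it. The upper bound via evaluating at the grid node $\px^\ast = \vec{l}^\ast/m^+$ and bounding the coefficient-versus-value discrepancy through the conversion formula is also the classical route (it goes back to Rivlin's univariate estimate, which Garloff generalizes), and your identification of this as the technical heart is accurate: the key inequality is that each bracket $\prod_j (l_j^\ast/m^+)^{i_j} - \prod_j \binom{l_j^\ast}{i_j}/\binom{m^+}{i_j}$ is \emph{non-negative}, which lets one bound the discrepancy by $\sum_{\vec{i}} |b_{i_1\ldots i_D}|$ times the bracket and extract the rate $\tfrac{m^+-1}{(m^+)^2}$ from the degree-two computation $\bigl(\tfrac{i}{k}\bigr)^2 - \tfrac{i(i-1)}{k(k-1)} = \tfrac{i(k-i)}{k^2(k-1)}$. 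Two small imprecisions: the constant $\delta$ aggregates contributions from \emph{all} multi-indices of total degree at least two (weighted by binomial factors), not just the quadratic coefficients, so your phrase ``the relevant second-order power-basis coefficients'' understates it, although you correctly hedge that the higher-order brackets must be checked not to degrade the rate; and since the $b_{i_1\ldots i_D}$ carry arbitrary signs, the estimate must be run with absolute values of coefficients times the (signed, non-negative) brackets, which your sketch implicitly does but should state. Your monotonicity argument via degree elevation (each elevated coefficient is a coordinatewise convex combination of adjacent ones, so $\underline{\beta}$ is nondecreasing in $m^+$) combined with the squeeze is exactly how the convergence claim is established in the source; note that $\underline{a}$ is a fixed number, so the theorem's phrase about both quantities converging really means $\underline{\beta} \nearrow \underline{a}$, which is what you prove.
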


\subsection{Affine Transformation of a Polynomial} \label{ssec:lstf}
\edt{By constructing an affine transformation from $[0, 1]^d$ into  an arbitrary hyperrectangle $Y$, we can extend Theorem~\ref{thm:positivity} to arbitrary hyperrectangular regions.}
%It is straightforward to extend Theorem~\ref{thm:positivity} to an arbitrary hyperrectangle $Y$ via an affine domain transformation \edt{that transforms $[0, 1]^d$ into $Y$}.

\begin{lemma} \label{lem:affine}
    Let $Y \subset X$ be a hyperrectangle. There exists a linear transformation $T^Y \in \reals^{M \times M}$ 
    %to polynomial $a'(\px)$ with coefficients $\bvec' = T^Y \bvec$ such that $\inf_{\px \in Y} a(\px) = \inf_{\px \in [0, 1]^d} a'(\px)$.
    such that $\inf_{\px \in Y} \pz(\px)^\top \bvec = \inf_{\px \in [0, 1]^d} \pz(\px)^\top T^Y \bvec$.
    %To bound $a(\px)$ on $Y$ using Theorem \ref{thm:positivity}, a simple affine transform $T^Y : Y \mapsto \hat{Y}$ can be applied to $a(\px)$ such that $\hat{Y}$ is normalized to $[0, 1]^D$ where $s_j = \up{y}_j - \low{y}_j$ and $t_j = \low{y}_j$.
    
    %\inf_{\px \in X_{(\cdot)}^q} \pz(\px)^\top \bvec  = \inf_{\px \in [0, 1]^d} \pz(\px)^\top T^{X_{(\cdot)}^q} \bvec.
        
\end{lemma}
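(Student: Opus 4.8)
The plan is to realize $T^Y$ as the coefficient-space action of an affine change of variables that carries the unit box onto $Y$. Writing $Y = \prod_{j=1}^D [\low{y}_j, \up{y}_j]$ with $\up{y}_j > \low{y}_j$, I would first define the componentwise affine map $g:[0,1]^D \to Y$ by $g(\py) = (g_1(y_1),\dots,g_D(y_D))$ with $g_j(t) = \low{y}_j + (\up{y}_j - \low{y}_j)\,t$. Each $g_j$ is a strictly monotone affine bijection of $[0,1]$ onto $[\low{y}_j,\up{y}_j]$, so $g$ is a homeomorphism of $[0,1]^D$ onto $Y$. The goal is then to pull the polynomial $a(\px)=\pz(\px)^\top\bvec$ back along $g$ and show the pullback is a linear operation on coefficient vectors.

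The key step is to verify that substituting $\px = g(\py)$ keeps us inside the \emph{same} maximal-degree-$m$ monomial space spanned by $\pz$. For each coordinate, $x_j^{k} = g_j(y_j)^k = \bigl(\low{y}_j + (\up{y}_j-\low{y}_j)\,y_j\bigr)^k$ expands by the binomial theorem into a polynomial in $y_j$ of degree exactly $k$; hence every monomial $\prod_j x_j^{l_j}$ maps to a polynomial in $\py$ whose per-variable degrees do not exceed $l_j \le m$. Therefore each entry of $\pz(g(\py))$ is a linear combination of the entries of $\pz(\py)$, so there is a square matrix $A \in \reals^{M\times M}$ (with entries assembled from the binomial factors above and powers of $\low{y}_j$ and $\up{y}_j-\low{y}_j$) with $\pz(g(\py)) = A\,\pz(\py)$. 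Setting $T^Y := A^\top$ then gives, for every $\bvec$,
\[
    a(g(\py)) = \pz(g(\py))^\top \bvec = (A\,\pz(\py))^\top \bvec = \pz(\py)^\top T^Y \bvec,
\]
so $T^Y$ is a single linear map independent of $\bvec$, as the statement requires.

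Finally, because $g$ is a bijection of $[0,1]^D$ onto $Y$, sweeping $\py$ over $[0,1]^D$ is identical to sweeping $\px = g(\py)$ over $Y$, whence
\[
    \inf_{\px \in Y} \pz(\px)^\top \bvec \;=\; \inf_{\py \in [0,1]^D} a(g(\py)) \;=\; \inf_{\py \in [0,1]^D} \pz(\py)^\top T^Y \bvec,
\]
which is exactly the claim after renaming the dummy variable to $\px$. I expect the only genuine obstacle to be the degree-preservation argument of the middle paragraph: one must confirm that the affine substitution maps the maximal-degree monomial space into itself, which is precisely what makes $A$ (and hence $T^Y$) a well-defined $M\times M$ matrix rather than forcing a degree increase. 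The remaining steps — bijectivity of $g$, linearity of the substitution, and the infimum identity through the homeomorphism — are routine. A minor point worth flagging is the nondegeneracy $\up{y}_j > \low{y}_j$, implicit in $Y$ being a full-dimensional hyperrectangle and what makes $g$ invertible; allowing degenerate faces only weakens $g$ to a continuous surjection, which still preserves the infimum.
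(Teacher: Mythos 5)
Your proof is correct and takes essentially the same approach as the paper: both realize $T^Y$ through the componentwise affine substitution $x_j \mapsto (\up{y}_j - \low{y}_j)\,x_j + \low{y}_j$ carrying $[0,1]^D$ onto $Y$, expand via the binomial theorem, and read off the induced linear action on coefficient vectors, which the paper records entrywise in \eqref{eq:affine} (exactly your $A^\top$). Your explicit checks of degree preservation and of the infimum identity via bijectivity merely spell out steps the paper leaves implicit.
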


\begin{proof}
    Consider a change of argument to $a(\hat{\px})$ such that $\hat{x}_j = s_j x_j + t_j$. Using the binomial theorem,
        $a(\hat{\px}) = \sum_{\vec{l} = \vec{0}}^{\vec{m}} b_{l_1 \ldots l_D} \prod_{j = 1}^D \sum_{i_j = 0}^{l_j} \binom{l_j}{i_j} (s_j x_j)^{i_j}  t_j^{l_j - i_j}$.
    %\begin{align*}
    %    a(\hat{\px}) %&= \sum_{l_1 = 0}^m \cdots \sum_{l_D = 0}^m b_{l_1 \ldots l_D} \prod_{j = 1}^D (s_j x_j + t_j)^{l_j} \\
    %    &= \sum_{\vec{l} = \vec{0}}^{\vec{m}} b_{l_1 \ldots l_D} \prod_{j = 1}^D \sum_{i_j = 0}^{l_j} \binom{l_j}{i_j} (s_j x_j)^{i_j}  t_j^{l_j - i_j}
    %\end{align*}
    The power basis coefficients $\bvec'$ of the transformed polynomial can then be computed as follows
    \vspace{-2mm}
    \begin{align} \label{eq:affine}
        b'_{l_1 \ldots l_D}= \sum_{i_1 = l_1}^m \cdots \sum_{i_D = l_D}^m \prod_{j = 1}^D \binom{i_j}{l_j} (s_j)^{l_j}  t_j^{i_j - l_j} b_{i_1 \ldots i_D}.
    \end{align}
    With $Y = [\underline{y}_1, \overline{y}_1] \times \cdots \times [\underline{y}_D, \overline{y}_D] \subset X$, let $s_j = \up{y}_j - \low{y}_j$ and $t_j = \low{y}_j$.
    %This linear transformation can be represented using a matrix $T$ such that $\bvec' = T \bvec$.
\end{proof}

\noindent
\edt{Using \eqref{eq:affine}, constructing a transformation matrix has time complexity $O(d(m)^{2d})$ and memory complexity $O((m)^{2d})$.}

%Let $Y = [\underline{y}_1, \overline{y}_1] \times \cdots \times [\underline{y}_D, \overline{y}_D] \subset X$ be a hyper-rectangular subset of the domain. 
%To bound $a(\px)$ on $Y$ using Theorem \ref{thm:positivity}, a simple affine transform $T^Y : Y \mapsto \hat{Y}$ can be applied to $a(\px)$ such that $\hat{Y}$ is normalized to $[0, 1]^D$ where $s_j = \up{y}_j - \low{y}_j$ and $t_j = \low{y}_j$.

\begin{remark} \label{rem:deg_conversion}
    % For simplicity of presentation, we focus on the Bernstein formulation to only hyperrectangular regions, rather than semi-algebraic sets. However, it is straightforward to apply the aforementioned change of argument technique to describe more complex sets using polynomial transformations, resulting in non-square matrix transformations.
    We restrict the Bernstein formulation to hyperrectangular regions rather than semi-algebraic sets. However, the aforementioned change-of-argument technique can extend to more complex sets via polynomial transformations.
\end{remark}

\subsection{Bernstein SBF Synthesis}

\textbf{Formulating the First Three Constraints. } \label{sec:first_three}
Using Theorem \ref{thm:positivity} and the affine transformations described above, constraints \eqref{eq:constraint1}-\eqref{eq:constraint3} can be formulated.
%To ensure that $B(\px)$ is non-negative over $X$, the affine transformation described in Sec. \ref{ssec:lstf} is applied to each partition $X^q$, then the resulting polynomial is converted to the Bernstein basis. 
Suppose that sets $X$, $X_s$, $X_u$, and $X_0$ are made up of hyperrectangular regions, denoted $X_{(\cdot)}^q$, such that $X_{(\cdot)} = \bigcup_{q} X_{(\cdot)}^q$.
\begin{lemma} \label{lem:first_three}
    For the $B(\px)$ polynomial in~\eqref{eq:B bernstein}, if
    \begin{subequations} \label{eq:first_three}
    \begin{align} 
        &\Phi_m^{m^+} T^{X^q} \bvec \geq \vec{0} && \forall X^q,  \label{eq:constraint_ss}\\
        &\Phi_m^{m^+} T^{X_u^q} \bvec \geq \vec{1} && \forall X_u^q,   \label{eq:constraint_u}\\
        -&\Phi_m^{m^+} T^{X_0^q} \bvec + \eta \vec{1} \geq \vec{0} && \forall X_0^q,  \label{eq:constraint_0}
    \end{align}
    \end{subequations}
     where $T^{X_{(\cdot)}^q}$ is the affine transformation matrix for region $X_{(\cdot)}^q$ computed using~\eqref{eq:affine}, 
     % in Lemma \ref{lem:affine}, 
     then
     constraints \eqref{eq:constraint1}-\eqref{eq:constraint3} 
     hold.
\end{lemma}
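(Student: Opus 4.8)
The plan is to prove each of the three constraints \eqref{eq:constraint1}--\eqref{eq:constraint3} region-by-region and then take the union over the hyperrectangular pieces. The central observation I would use is that, for a fixed region $X_{(\cdot)}^q$, the vector $\Phi_m^{m^+} T^{X_{(\cdot)}^q}\bvec$ is precisely the degree-$m^+$ Bernstein coefficient vector of $B$ after the affine change of argument that maps $[0,1]^d$ onto $X_{(\cdot)}^q$. Indeed, by Lemma~\ref{lem:affine} the matrix $T^{X_{(\cdot)}^q}$ produces the power-basis coefficients of the restricted-and-rescaled polynomial on the unit box (with $\inf_{\px\in X_{(\cdot)}^q}\pz(\px)^\top\bvec = \inf_{\px\in[0,1]^d}\pz(\px)^\top T^{X_{(\cdot)}^q}\bvec$), and applying the forward conversion matrix $\Phi_m^{m^+}$ of \eqref{eq:tf_fwd} turns these into Bernstein coefficients of the same polynomial, exactly since the degree raising to $m^+$ is representation-preserving. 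This identification is what lets me invoke Theorem~\ref{thm:positivity}, whose one-sided bound $\underline{\beta}\le\underline{a}$ points in exactly the direction needed.

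For constraint \eqref{eq:constraint1}, I would fix a region $X^q$. Hypothesis \eqref{eq:constraint_ss} states that every entry of the Bernstein coefficient vector is non-negative, so $\underline{\beta}=\min_l\beta_l\ge 0$. Theorem~\ref{thm:positivity} then gives $0\le\underline{\beta}\le\underline{a}$, where $\underline{a}$ is the infimum of the transformed polynomial over $[0,1]^d$; by Lemma~\ref{lem:affine} this equals $\inf_{\px\in X^q}B(\px)$. Hence $B(\px)\ge 0$ on $X^q$, and since $X=\bigcup_q X^q$, we obtain $B(\px)\ge 0$ on all of $X$.

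Constraints \eqref{eq:constraint2} and \eqref{eq:constraint3} require handling the additive constants $1$ and $\eta$, and here the key fact I would invoke is the partition-of-unity property of the Bernstein basis, $\sum_{\vec{l}}\phi_{\vec{l}}^{m^+}(\px)=1$. Consequently a constant $c$ has Bernstein coefficient vector $c\vec{1}$, so shifting $B$ by $c$ shifts all of its Bernstein coefficients by $c$. For \eqref{eq:constraint2}, hypothesis \eqref{eq:constraint_u} says the Bernstein coefficients of $B$ on $X_u^q$ satisfy $\betavec\ge\vec{1}$; equivalently the Bernstein coefficients of $B-1$ are non-negative, and applying the argument above to $B-1$ yields $\inf_{\px\in X_u^q}(B-1)\ge 0$, i.e.\ $B\ge 1$ on $X_u^q$. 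For \eqref{eq:constraint3}, which is an upper bound, I would instead apply Theorem~\ref{thm:positivity} to the negated polynomial $\eta-B$: hypothesis \eqref{eq:constraint_0} states that its Bernstein coefficients $\eta\vec{1}-\betavec$ are non-negative, whence $\inf_{\px\in X_0^q}(\eta-B)\ge 0$, i.e.\ $B\le\eta$ on $X_0^q$. Unioning over $q$ then completes all three.

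The main obstacle I anticipate is not any single inequality but the bookkeeping that establishes the identification in the first paragraph: one must confirm that composing the affine-transformation matrix $T^{X_{(\cdot)}^q}$ (power basis on the unit box) with the power-to-Bernstein matrix $\Phi_m^{m^+}$ genuinely yields the Bernstein coefficients of the restricted-and-rescaled polynomial, and that degree raising to $m^+$ preserves the polynomial exactly. Once that identification and the partition-of-unity shift are in place, each of the three constraints follows immediately from the lower bound $\underline{\beta}\le\underline{a}$ in Theorem~\ref{thm:positivity}.
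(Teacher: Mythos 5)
Your proposal is correct and follows essentially the same route as the paper: reduce each region to the unit box via Lemma~\ref{lem:affine}, convert to Bernstein coefficients with $\Phi_m^{m^+}$, and invoke the one-sided bound $\underline{\beta}\le\underline{a}$ of Theorem~\ref{thm:positivity}. The only difference is that you make explicit what the paper's terse proof leaves implicit --- the partition-of-unity shift for the constants $1$ and $\eta$ and the negation trick for the upper bound in \eqref{eq:constraint3} --- which is a faithful elaboration, not a different argument.
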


\begin{proof}
    Given a partition $X_{(\cdot)}^q$, by Lemma \ref{lem:affine}: 
    \vspace{-1mm}
    \begin{equation}
        \inf_{\px \in X_{(\cdot)}^q} \pz(\px)^\top \bvec  = \inf_{\px \in [0, 1]^d} \pz(\px)^\top T^{X_{(\cdot)}^q} \bvec.
        \vspace{-1mm}
    \end{equation}
    Then, after converting to the Bernstein basis using $\Phi_m^{m^+}$, by Theorem \ref{thm:positivity}, $\inf_{\px \in X_{(\cdot)}^q} \pz(\px)^\top \bvec \geq \min_l \betavec_l^{m^+}$ where $\betavec_l^{m^+} = \Phi_m^{m^+} T^{X^q} \bvec$.
\end{proof}

%\begin{align} \label{eq:constraint_ss}
%    \Phi_m^{m^+} T^{X^q} \bvec \geq \vec{0} \implies B(\px) \geq 0 \text{ for all } \px \in X^q.
%\end{align}
%A similar procedure is employed to formulate the unsafe set and initial set constraints:
%\begin{align}
%    \Phi_m^{m^+} T^{X_u^q} \bvec \geq \vec{1} & \implies B(\px) \geq 1 \;\; \forall \px \in X_u^q \label{eq:constraint_u}\\
%    -\Phi_m^{m^+} T^{X_0^q} \bvec + \eta \vec{1} \geq \vec{0} & \implies B(\px) \leq \eta \;\;  \forall  \px \in X_0^q. \label{eq:constraint_0}
%\end{align}

\textbf{Formulating the Fourth Constraint. } \label{sec:fourth}
For noise distributions with finite moments, we can observe that, once expanded, $\expec{B(f(\px_{k}) + \pv_k)}$ is itself a composed polynomial of degree at most $p = m \sum_{d=1}^D n_d$. We can further manipulate the fourth constraint into matrix form, isolating quantities computed from the dynamics from quantities computed from the moments of the noise distribution.

\edt{Consider a ``dynamics'' matrix $F\in \mathbb{R}^{p^D \times m^D}$ where the column corresponding to the multi-index $\vec{0} \leq (i_1 \dots i_D) \leq \vec{m}$ is the coefficients of $\prod_{j=1}^D f_j(\px)^{i_j}$. The columns of $F$ correspond to monomials of a degree-$m$ polynomial composed with $f$.}
%Consider a ``dynamics'' matrix $F\in \mathbb{R}^{p^D \times m^D}$ defined as
%\begin{align} \label{eq:F}
%    F = \Big[ \vec{g}^{(0 \ldots 0)} \cdots \vec{g}^{(i_1 \ldots i_D)} \cdots \vec{g}^{(m \ldots m)}\Big],
%\end{align}
%% constructed by computing the coefficient column vectors of 
%where
%$\vec{g}^{(i_1 \ldots i_D)} = \prod_{j=1}^D f_j(\px)^{i_j}$ for each $\vec{0} \leq (i_1 \dots i_D) \leq \vec{m}$ are the coefficient column vectors.
Similarly,
let $\Gamma \in \mathbb{R}^{m^D \times m^D}$ be a ``noise'' matrix with elements $\Gamma_{(l_1 \ldots l_D), (i_1 \ldots i_D)}$ characterized by the row and column corresponding to the enumeration of $(l_1 \ldots l_D)$ and $(i_1 \ldots i_D)$, respectively. The elements of $\Gamma$ are defined as
\begin{equation}
    \Gamma_{(l_1 \ldots l_D), (i_1 \ldots i_D)} = 
    \begin{cases}
        \prod_{j=1}^D \binom{l_j}{i_j} v_j^{l_j - i_j} &\text{ if } i_j \leq l_j \\
        0 &\text{ otherwise}.
    \end{cases}
\end{equation}
The element-wise expectation $\expec{\Gamma}$ of the noise matrix can be computed with the multivariate moments of $p(\pv)$.

\begin{lemma} \label{lem:fourth}
    Given the dynamics matrix $F$ and noise matrix $\Gamma$, if, %constraint ~\eqref{eq:constraint4} can be described over region $X_s^q$ as
    for every partition $X_s^q$,
    \begin{align}
        -\Phi_p^{p^+} T^{X_s^q}\big(F\, \expec{\Gamma} - \Delta_{(p^D \times m^D)} \big) \bvec + \gamma \vec{1} \geq \vec{0}, \label{eq:constraint_s}
    \end{align}
    then constraint ~\eqref{eq:constraint4} holds,
    where $\Delta_{p^D, m^D}$ is a rectangular permutation matrix that converts a degree $m$ vectorization of the coefficients into an equivalent degree $p$ vectorization, with higher order coefficients equal to zero.
\end{lemma}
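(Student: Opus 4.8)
The plan is to verify that the left-hand side of \eqref{eq:constraint_s} computes exactly the Bernstein lower bound of the quantity $-\big(\expec{B(f(\px)+\pv)} - B(\px)\big) + \gamma$ over the region $X_s^q$, so that its non-negativity certifies constraint \eqref{eq:constraint4} via the same mechanism as Lemma~\ref{lem:first_three}. The core work is algebraic: I must show that the power-basis coefficient vector of $\expec{B(f(\px)+\pv)} - B(\px)$, expressed as a degree-$p$ polynomial, equals $\big(F\,\expec{\Gamma} - \Delta_{(p^D\times m^D)}\big)\bvec$. Once this identity is established, the rest is a direct application of the affine transformation $T^{X_s^q}$ (Lemma~\ref{lem:affine}) and the Bernstein conversion $\Phi_p^{p^+}$ (Theorem~\ref{thm:positivity}) exactly as in the proof of Lemma~\ref{lem:first_three}.

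First I would expand $B(f(\px)+\pv)$ using the power-basis form of $B$ in \eqref{eq:B bernstein}, writing each monomial $\prod_j (f_j(\px)+v_j)^{l_j}$. Applying the binomial theorem to each factor $(f_j(\px)+v_j)^{l_j} = \sum_{i_j=0}^{l_j}\binom{l_j}{i_j} f_j(\px)^{i_j} v_j^{l_j-i_j}$ separates the dynamics contribution $\prod_j f_j(\px)^{i_j}$ from the noise contribution $\prod_j v_j^{l_j-i_j}$. Taking the expectation $\expec{\cdot}$ replaces $\prod_j v_j^{l_j-i_j}$ by the corresponding multivariate moment of $p(\pv)$, which is precisely the construction of $\expec{\Gamma}$. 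Thus I would identify $\Gamma_{(l_1\ldots l_D),(i_1\ldots i_D)}$ as the coefficient linking the monomial index $\vec{i}$ of the composed polynomial to the original coefficient index $\vec{l}$, and confirm that $\Gamma\,\bvec$ (after taking expectation) yields the coefficients of $B$ re-expressed in terms of the products $\prod_j f_j(\px)^{i_j}$.

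Next I would verify that multiplication by the dynamics matrix $F$ correctly substitutes these products of $f_j$, expanding each $\prod_j f_j(\px)^{i_j}$ into its own power-basis coefficients of degree at most $p$; by the stated definition of $F$, its column indexed by $\vec{i}$ holds exactly these coefficients, so $F\,\expec{\Gamma}\,\bvec$ gives the degree-$p$ coefficient vector of $\expec{B(f(\px)+\pv)}$. Subtracting $B(\px)$ requires lifting its degree-$m$ coefficient vector $\bvec$ into the degree-$p$ indexing, which is the role of the permutation/embedding matrix $\Delta_{(p^D\times m^D)}$ that pads higher-order entries with zeros; hence $\big(F\,\expec{\Gamma}-\Delta_{(p^D\times m^D)}\big)\bvec$ is the degree-$p$ coefficient vector of $\expec{B(f(\px)+\pv)} - B(\px)$.

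Finally, negating gives the coefficients of $-\big(\expec{B(f(\px)+\pv)}-B(\px)\big)$; applying $T^{X_s^q}$ maps the infimum over $X_s^q$ to an infimum over $[0,1]^d$ (Lemma~\ref{lem:affine}), and $\Phi_p^{p^+}$ converts to the Bernstein basis so that the minimum component lower-bounds that infimum (Theorem~\ref{thm:positivity}). Requiring this lower bound plus $\gamma$ to be componentwise non-negative over every partition $X_s^q$ therefore guarantees $\expec{B(f(\px)+\pv)}-B(\px)\leq\gamma$ for all $\px\in X_s$, which is \eqref{eq:constraint4}. I expect the main obstacle to be the careful bookkeeping of the multi-index enumerations: precisely matching the row/column conventions of $F$, $\expec{\Gamma}$, and $\Delta$ so that the two binomial expansions (one from the noise split, one from the affine change of argument) compose into the claimed matrix product without index mismatches or off-by-one degree errors in $p = m\sum_d n_d$.
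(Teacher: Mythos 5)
Your proposal is correct and follows essentially the same route as the paper's proof: binomial expansion of $\expec{B(f(\px)+\pv)}$ to separate dynamics ($F$) from noise moments ($\expec{\Gamma}$), identification of $\big(F\,\expec{\Gamma}-\Delta_{(p^D\times m^D)}\big)\bvec$ as the degree-$p$ coefficient vector of $\expec{B(f(\px)+\pv)}-B(\px)$, and then the affine and Bernstein transformations exactly as in Lemma~\ref{lem:first_three}. The multi-index bookkeeping you flag as the main risk is indeed the only delicate point, and the paper treats it at the same level of detail as you do.
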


%The first term $\expec{B(f(\px_{k}) + \pv_k)}$ is itself a composed polynomial. Representing the $B(\px)$ in the maximal-degree form
%%Examining the power basis representation of $B(\px)$:
%\begin{equation}
%   B(\px) = \sum_{l_1=0}^m \cdots \sum_{l_D=0}^m b_{l_1 \ldots l_D} \prod_{j=1}^D x_j^{l_j},
%\end{equation}

\begin{proof}
    The expected barrier composed with the dynamics is
    \begin{align} \label{eq:composed}
        % \expec{B(f(\px) + \pv)} &= \sum_{\vec{l} = \vec{0}}^{\vec{m}} b_{l_1 \ldots l_D} \expec{\prod_{j=1}^D (f_j(\px) + v_j)^{l_j}}.
        \expec{B(f(\px) + \pv)} &= \sum_{\vec{l} = \vec{0}}^{\vec{m}} b_{l_1 \ldots l_D} \mathbb{E}\Big[\prod_{j=1}^D (f_j(\px) + v_j)^{l_j} \Big].
        %&= \sum_{l_1=0}^m \cdots \sum_{l_D=0}^m b_{l_1 \ldots l_D} \prod_{j=1}^D \Big( \sum_{i_j = 0}^{l_j} f_j(\px)^{i_j} \pv^{l_j - i_j} \Big)\\
    \end{align}
    \edt{Using the binomial theorem,}
    \begin{align}
        \!\!\!\mathbb{E}\Big[\! \prod_{j=1}^D (f_j(\px) + \pv)^{l_j} \! \Big] 
        %&= \expec{\prod_{j=1}^D \Big( \sum_{i_j = 0}^{l_j} \binom{l_j}{i_j} f_j(\px)^{i_j} v_j^{l_j - i_j} \Big)}\\
        %&= \expec{ \sum_{i_1 = 0}^{l_1} \cdots \sum_{i_D = 0}^{l_D} \prod_{j=1}^D f_j(\px)^{i_j}  \binom{l_j}{i_j} v_j^{l_j - i_j} }\\
        \!=\! \sum_{\vec{i} = \vec{0}}^{\vec{l}} \prod_{j=1}^D f_j(\px)^{i_j}  \binom{l_j}{i_j} \expec{ v_j^{l_j - i_j}}\!. \label{eq:composed_single}
    \end{align}
    $\prod_{j=1}^D f_j(\px)^{i_j}$ is a product polynomial with coefficients equal the columns in $F$.
    \edt{By observing that $\prod_{j=1}^D \binom{l_j}{i_j} \expec{ v_j^{l_j - i_j} }$ is a scalar, the coefficients of \eqref{eq:composed_single} are linear combinations of the columns of $F$, allowing \eqref{eq:composed} to be written as $F \; \expec{\Gamma} \bvec$.}
    %Since $\prod_{j=1}^D \binom{l_j}{i_j} \expec{ v_j^{l_j - i_j} }$ is a scalar, the coefficients of \eqref{eq:composed_single} are simply linear combinations of the columns of $F$.
   % Using the aforementioned definitions of $F$ and $\Gamma$, the coefficients of \eqref{eq:composed_single} appear in the column corresponding to $(l_1 \ldots l_D)$ of the matrix product $F \; \expec{\Gamma}$. 
   % Eq. \eqref{eq:composed} represents a linear combination of the columns of $F \expec{\Gamma}$ according to the power-basis barrier coefficients $\bvec$. 
    %Therefore, the power-basis coefficients of the polynomial $\expec{B(f(\px) + \pv)}$ are equal to $F \; \expec{\Gamma} \bvec$.
    Finally, the coefficient vector of $\expec{B(f(\px) + \pv)} - B(\px)$ is equal to $F \; \expec{\Gamma} \bvec - \Delta_{(p^D \times m^D)} \bvec$.
    %\begin{equation} \label{eq:last_constraint_power}
    %    F \; \expec{\Gamma} \bvec - \Delta_{(p^D \times m^D)} \bvec.
    %\end{equation}
    Then, similar to Lemma \ref{lem:first_three}, the affine and Bernstein transformations are applied to recover the lower bound over each safe set partition $X_s^q$.
\end{proof}
A consequence of Theorem \ref{thm:positivity} is that as $m^+ , p^+ \rightarrow \infty$, Lemmas \ref{lem:first_three} and \ref{lem:fourth} approach the biconditional implication, i.e., constraints \eqref{eq:constraint1}-\eqref{eq:constraint4} hold iff ~\eqref{eq:first_three} and ~\eqref{eq:constraint_s} are satisfied.

\begin{theorem}
    %Problem \ref{prob} can be formulated as a \textit{linear program} such that the solution is optimal with respect to the polynomial template~\eqref{eq:B bernstein}.
    SBF synthesis for System~\eqref{eq:dynamics} using polynomial template $B(\px)$ in~\eqref{eq:B bernstein} can be formulated as a \textit{linear program}. \edt{Additionally, as $m, m^+ \rightarrow \infty$, the LP solution is the optimal SBF, making the Bernstein approach \textit{asymptotically complete}.}
\end{theorem}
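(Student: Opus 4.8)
The plan is to establish the two assertions separately: first that the synthesis reduces to an LP, and then that its optimal value converges to that of the exact SBF problem. For the LP claim, I would take as decision variables the coefficient vector $\bvec$ of $B(\px)$ together with the scalars $\eta$ and $\gamma$. The objective inherited from Theorem~\ref{thm:sbfc} is to maximize $\delta_\safe = 1 - (\eta + K\gamma)$, i.e. to minimize the linear form $\eta + K\gamma$, and the feasible region is precisely the conjunction of the constraints in Lemmas~\ref{lem:first_three} and~\ref{lem:fourth}, namely \eqref{eq:constraint_ss}--\eqref{eq:constraint_0} and \eqref{eq:constraint_s}. The key observation is that every matrix appearing there — $\Phi_m^{m^+}$, the affine maps $T^{X_{(\cdot)}^q}$, the dynamics matrix $F$, the noise matrix $\expec{\Gamma}$, and the permutation $\Delta$ — is a constant precomputed from the dynamics and the moments of $p(\pv)$, independent of $\bvec,\eta,\gamma$. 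Hence each constraint has the affine form $A\bvec + c\,\eta \geq \vec{d}$ or $A\bvec + c\,\gamma \geq \vec{d}$; stacking them over the finitely many partitions $q$ yields finitely many linear inequalities, and a linear objective over such a region is by definition an LP.

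For asymptotic completeness, I would invoke the biconditional limit recorded just above the theorem: by the convergence guarantee of Theorem~\ref{thm:positivity}, the gap between the true infimum $\underline{a}$ and the Bernstein bound $\underline{\beta}$ vanishes as $m^+ \to \infty$. Consequently, for fixed $m$ and with $m^+, p^+ \to \infty$, the LP constraints \eqref{eq:first_three} and \eqref{eq:constraint_s} become equivalent to the exact barrier constraints \eqref{eq:constraint1}--\eqref{eq:constraint4} restricted to degree-$m$ templates, so the LP computes the best $\delta_\safe$ attainable by any degree-$m$ polynomial SBF. It then remains to let $m \to \infty$ and show that this best-over-degree-$m$ value converges to the value attained by an optimal continuous SBF $B^\star$. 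Here I would appeal to the Stone--Weierstrass theorem: on the compact set $X$, polynomials are dense in $C(X)$, so $B^\star$ can be approximated uniformly to within any $\varepsilon > 0$ by a polynomial $B$ of sufficiently high degree.

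The main obstacle is converting a uniform approximation of $B^\star$ into strict feasibility of the inequality constraints with only an $O(\varepsilon)$ loss in $(\eta,\gamma)$. A uniform approximant may violate, for instance, $B(\px) \geq 1$ on $X_u$ by up to $\varepsilon$, so I would absorb this with a slack argument: replace $B$ by $B + \varepsilon$, which restores the lower-bound constraints at the cost of inflating $\eta$ by $\varepsilon$, and then bound the induced perturbation of the expectation constraint $\expec{B(f(\px)+\pv)} - B(\px)$, a bounded linear functional of $B$, so that it too degrades by only $O(\varepsilon)$. Extra care is needed because this expectation evaluates $B$ at successor points $f(\px)+\pv$ that may lie outside $X$; I would handle this by approximating $B^\star$ uniformly on a compact superset containing the effective support of the one-step successors and using the finite-moment assumption on $p(\pv)$ to control the remaining tail contribution. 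Sending $\varepsilon \to 0$ then shows the LP optimum $\delta_\safe$ converges to the optimal SBF value, which establishes asymptotic completeness.
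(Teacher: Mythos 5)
Your proposal follows essentially the same route as the paper's proof: the LP claim rests on the observation that $\Phi_m^{m^+}$, the maps $T$, the matrices $F$ and $\expec{\Gamma}$ are precomputable constants so that \eqref{eq:first_three} and \eqref{eq:constraint_s} are affine in $(\bvec, \eta, \gamma)$ with linear objective $\eta + K\gamma$, and asymptotic completeness combines the convergence of the Bernstein relaxation (Theorem~\ref{thm:positivity}) with Weierstrass approximation --- precisely the two ingredients the paper cites. Your $\varepsilon$-slack argument and the tail-moment handling of successor states outside $X$ merely flesh out details that the paper's one-line completeness argument leaves implicit, so the two proofs coincide in approach.
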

\begin{proof}
    The objective function~\eqref{eq:psafe} is linear in $\eta$ and $\gamma$. $F$, $\expec{\Gamma}$ and transformation matrices $T$ can be computed beforehand, the constraint formulations in ~\eqref{eq:first_three} and ~\eqref{eq:constraint_s} are linear with respect to $\bvec$, $\eta$, and $\gamma$. \edt{For asymptotic completeness: Theorem \ref{thm:positivity} along with the Weierstrass Approximation Theorem \cite{weierstrass1885analytische} show that Bernstein polynomials are convergent polynomial relaxations and function approximations over an interval.}
\end{proof}

A LP has complexity $O(M^2 C)$ \cite{boyd2004convex} where $M$ is the number of decision variables and $C$ is the number of constraints and, is generally simpler than SDP \cite{vandenberghe1996semidefinite}.
\edt{Obtaining the optimal barrier is simply a convex linear program with $\big((m+1)^D + 2\big)$ variables and $(Q + Q_u + Q_0)(m^+ + 1)^D + Q_s(p^+ + 1)^D$ constraints where $Q$, $Q_u$, $Q_0$ and $Q_s$ are the number of hyperrectangular regions in $X$, $X_u$, $X_0$ and $X_s$ respectively.}

\begin{remark}
    %The formulations presented in this section assume $B(\px)$ includes all monomials of the maximal-degree form. 
    For direct comparison to SoS (Sec. \ref{sec:experiments}), one can express the Bernstein formulation using cumulative-degree form by removing rows of $T^{X}$, $T^{X_u}$, $T^{X_0}$, and $F \; \expec{\Gamma} - \Delta_{(p^D \times m^D)}$ corresponding to indices $\vec{l}$ where $|\vec{l}| > m$.
\end{remark}

%\begin{remark} \label{rem:subd}
%    According to Theorem \ref{thm:positivity}, increasing $m^+$ will give tighter bounds, however \cite{garloff1985convergent} shows that subdividing the hyperrectangular regions can yield tighter bounds. Applying this procedure to all sets $X$, $X_s$, etc. can result in exceedingly large linear programs, however an iterative procedure can be used to carefully select which regions to subdivide based on the respective tightness of the bound.
%\end{remark}

\section{Convergence Rates of Polynomial Bounds} \label{sec:convergence}
Sections~\ref{sec:prelim} and \ref{sec:Bern SBF} present two approaches to solving Problem~\ref{prob} using SoS and Bernstein relaxations. In this section, 
% we provide a theoretical analysis on the difference in convergence rates between the various hyperparameters of each synthesis formulation.
% Section~\ref{sec:lagr} illustrates the improvement of SoS relaxations by raising the degree of the Lagrangian multipliers. Similarly, the bounds provided by Bernstein polynomials can be tightened using the two approaches discussed below.
% Similar to SoS, where increasing the degree of Lagrangian multipliers improves SoS relaxations. Likewise, Bernstein polynomial bounds can be tightened using the following two approaches.
% We start by introducing 
% two approaches for tightened the bound of the Bernstein approach, which are analogous to increasing the degree of Lagrangian multipliers to improves SoS relaxations as discussed in Section~\ref{sec:lagr}.
% two methods to tighten Bernstein bounds, analogous to increasing Lagrangian multiplier degrees for SoS relaxations (Section~\ref{sec:lagr}).
we analyze how different hyperparameters affect convergence rates in each formulation. Specifically, we introduce two methods for tightening Bernstein bounds, analogous to increasing the degree of Lagrangian multipliers in SoS relaxations as discussed in Section~\ref{sec:lagr}.

\subsection{Bernstein Relaxations}
\subsubsection{Raising the Bernstein Degree}
Recall, \eqref{eq:tf_fwd} allows one to choose a degree $m^+ \geq m$ when converting from the polynomial power basis to Bernstein basis. By Theorem \ref{thm:positivity}, the bounds on $a(\px)$ converge linearly with respect to $m^+$, i.e., $\delta \propto \frac{1}{m^+}$ where $ \delta = \underline a - \underline{\beta}$. Consequently, the number of constraints in the LP formulation increases as $O((m^+)^D)$.

\subsubsection{Subdivision} \label{sec:subd}
Alternatively, \cite{garloff1985convergent} shows that subdividing the hyperrectangular region into $\kappa^D$ regions affords Bernstein polynomial bounds that converge with respect to $\frac{1}{\kappa}$. While the number of coefficients of the Bernstein polynomial may remain fixed, this procedure increases the number of regions to check by $\kappa^D$, meaning the number of constraints in the LP increases as $O((\kappa m^+)^D)$.

\subsection{SoS Relaxations}
Work \cite{nie2007complexity} provides a convergence rate for SoS relaxations over a semi-algebraic set. Let $m_{\lambda h} = \max_{i \in \{1, \ldots, r\}}deg(\lambda_i(\px) h_i(\px))$. 
Then, increasing the degree of the Lagrangian multipliers improves the relaxation bounds at a rate $\propto \big(\log{m_{\lambda h}} - \log{c_h}\big)^{-1/c_h},$ where $c_h$ is a constant depending on the coefficients of $\sasvec$ (see \cite{nie2007complexity}). Recall that increasing the degree of the Lagrangian multipliers increases the number of decision variables. 
%\begin{theorem}[Convergence Rate of Lasserre's Procedure \cite{nie2007complexity}] \label{thm:putinar_conv_rate}
%    Let $a(\px) \in \polys$ be a cumulative-degree $m$ polynomial that is positive on $S$. Let $m_{\lambda h} = \max_{i \in \{1, \ldots, r\}}deg(\lambda_i(\px) h_i(\px))$. Then, for any degree $m_{\lambda h} > c_h \exp((2 m^2 D^m)^c)$,
%    % \begin{equation}
%        $0 \leq \delta \leq \frac{6m^3 D^{2m} c_a}{\big(\log{\frac{m_{\lambda h}}{c_h}}\big)^{1/c_h}},$
%    % \end{equation}
%    where $\delta = \tilde{\underline a}$ with $\tilde{\underline a}$ being the bound yielded using a SoS relaxation and constants $c_a, c_h \in \reals_{\geq 0}$ depending on the coefficients of $a(\px)$ and $\sasvec$ respectively.
%\end{theorem}
While problem-specific constants ($c_h$) may affect the rate of convergence, the logarithmic convergence of SoS formulations is theoretically slower than the proposed Bernstein relaxations. 
In contrast to these theoretical predictions, Section~\ref{sec:experiments} shows the convergence of Lagrangian multipliers can actually be affordable compared to techniques for Bernstein relaxations.

\ifarxiv
    
\section{Adaptive Subdivision}
%According to Theorem \ref{thm:positivity}, increasing $m^+$ will give tighter bounds, however \needcite{} shows that subdividing the hyperrectangular intervals can yield tighter bounds. 
%Specifically, for a given hyperrectangle $S$, one can divide the region into $\kappa^D$ regions, denoted $S^q$ such that $S = \cup_{q=1}^{\kappa^D} S^q$. 
Applying the aforementioned subdivision procedure to all sets $X$, $X_s$, etc. can result in exceedingly large linear programs. 
As a precursor to our practical comparison between SoS and Bernstein formulations, we present an algorithmic approach to adaptively select which hyperrectangles to subdivide, resulting in improved bounds with relatively few constraints. 
%Work \needcite{} shows that, one can improve the Bernstein relaxation bounds by increasing the number sub-regions that make up $X, X_s, X_u,$ and $X_0$ by subdividing existing regions. One can naively subdivide all existing sets into $\kappa^D$ partitions, however this multiplies the number of constraints by $(\kappa^{D})^I$ where $I$ is the number of times this recursive procedure is applied. Not all of the subdivided partitions will constrain $B(\px)$, suggesting redundancy.

Let $A^S$ denote the constraint matrix associated with a hyperrectangular set $S$ such that constraints \eqref{eq:constraint_ss}, \eqref{eq:constraint_0}, \eqref{eq:constraint_u}, and \eqref{eq:constraint_s} can be equivalently represented as $A^S \bvec \geq \mathbf{L}_{\eta, \gamma}$ for some vector $\mathbf{L}_{\eta, \gamma} \in \reals^{|\bvec|}$ dependent on $\eta, \gamma$. Given $\bvec$, $\eta$, and $\gamma$, we define the \textit{robustness} of a given set $S$ as 
\begin{equation}
    R_{\bvec, \eta, \gamma}(S) = \min(A^S \bvec - \mathbf{L}_{\eta, \gamma}).
\end{equation}
If $\bvec$ is constrained by $S$, then $R(S) = 0$, otherwise $R(S) > 0$. 

\subsubsection{Algorithm}
For ease of presentation, we can formulate our adaptive subdivision as constrained tree-search. Let $H$ denote the set of all hyperrectangular regions $S$ such that $S \subseteq X$.
Let \textit{node} $\mathbf{n} = (Q, Q_s, Q_u, Q_0)$ be a tuple where each $Q_{(\cdot)} = \{S_1, S_2, \ldots, S_r\} \in 2^H$ is a set of hyperrectangles such that $\cup_{i=1}^r S_i = X_{(\cdot)}$.
Let the set of all nodes be denoted as $\mathbf{N}$ such that $\mathbf{n}_0 = (X, X_s, X_u, X_0) \in \mathbf{N}$.
Let $\tau : \mathbf{N} \times H \times \{1, \ldots, D\} \mapsto \mathbf{N}$ be a transition function such that $\tau(\mathbf{n}, S, I) = \mathbf{n}'$ iff for some $Q_{(\cdot)} \in \mathbf{n}$ and $Q_{(\cdot)}' \in \mathbf{n}'$, $S \in Q_{(\cdot)}$, $S \not \in Q_{(\cdot)}'$ and dividing $S$ along dimension $I$ results in two sets $S_0', S_1' \in Q_{(\cdot)}'$. 
%Since $\tau$ effectively encodes all the possible ways one can split a set in $\mathbf{n}$, $\mathbf{N}$ must be closed under any existing transition $\tau$.

The main idea behind our algorithm is to determine an optimal subdivision that maximizes the robustness with respect to an existing \textit{heuristic} barrier certificate $\tilde{\bvec}, \tilde{\eta}, \tilde{\gamma}$, i.e. 
%For ease of presentation we can express this objective with an ordering over a given set of nodes $\mathbf{N}$ as
\begin{equation}
    \mathbf{n}^* = \argmax_{\mathbf{n} \in \mathbf{N}} \big[ \min_{S \in \mathbf{n}} R_{\tilde{\bvec}, \tilde{\eta}, \tilde{\gamma}}(S) \big]
\end{equation}
using an abuse of notation $S\in \mathbf{n}$ meaning $S\in Q \cup Q_s \cup Q_u \cup Q_0$. The returned $\mathbf{n}^*$ is then used as a predisposition to a new LP with tighter relaxations. For practical purposes, we are interested in finding the best partitioning such that the number of constraints in the corresponding LP is below a threshold $c_{max}$. To achieve this, we present a constrained depth-first-search approach to finding $\mathbf{n}^*$ subject to $c_{max}$ in Algorithm \ref{alg:adasubd}.

\begin{algorithm}
    \caption{Adaptive Subdivision} \label{alg:adasubd}
    \begin{algorithmic}[1]
        \State $\mathbf{O} = \{\mathbf{n}_0\}$
        \While{$constraints(\mathbf{n}) < c_{max}$ where $\mathbf{n} = best(\mathbf{O})$;}
            \For{$I \in \{1, \ldots, D\}$}
                \State $\underline S \gets \argmin_{S \in \mathbf{n}}R_{\tilde{\bvec}, \tilde{\eta}, \tilde{\gamma}}(S)$
                \State $\mathbf{n}' \gets \tau(\mathbf{n}, \underline S, I)$
                \State $\mathbf{O} \gets \mathbf{O} \cup \{\mathbf{n}'\}$
            \EndFor
        \EndWhile
        \State \Return $best(\mathbf{O})$
    \end{algorithmic}
\end{algorithm}

\noindent The set $\mathbf{O}$ is an ordered set of encountered nodes such that 
\begin{equation}
    best(\mathbf{O}) = \argmax_{\mathbf{n} \in \mathbf{O}} \big[ \min_{S \in \mathbf{n}} R_{\tilde{\bvec}, \tilde{\eta}, \tilde{\gamma}}(S) \big]
\end{equation}
The current best node $\mathbf{n}$ is selected (line 2) and the minimum robustness set $\underline S \in \mathbf{n}$ is divided along each dimension (lines 3-5). If the new nodes have not been encountered before, they are added to $\mathbf{O}$ (line 6). 
The algorithm runs until $best(\mathbf{O})$ exceeds the maximum number of constraints (line 2).
\fi
\section{Experimental Evaluations} \label{sec:experiments}
This section provides a practical comparison between the SoS and Bernstein formulations for SBF synthesis. Both approaches are evaluated based on \emph{tightness} of the safety probability bound (accuracy) and \emph{computation time}. Additionally, the solutions are post-hoc checked against their constraints to ensure that the SBF does not suffer from numerical stability issues.
\edt{The experiments were performed on a 2D and 3D linear system with contractive dynamics with difference function $f(\px_k) = 0.5 \px_k + \pv_k$ where $\pv_k \sim \mathcal{N}(\mathbf{0}, 0.01 I)$. Each system was tested on a simple environment (S), and a hard environment (H) with more regions requiring a more expressive barrier. For 2D, $X = [-1, 0.5] \times [-0.5, 0.5]$, $X_0 = [-0.8, -0.6] \times [-0.2, 0.0]$, and for (H) experiments, $X_u = [-0.57, -0.53] \times [-0.17, -0.13] \cup [-0.57, -0.53] \times [-.28, 0.32]$. Boundary rectangles are placed around $X$ to ensure leaving $X$ is unsafe. The 3D experiment sets are described by $X^{(3D)}_{(\cdot)} = X^{(2D)}_{(\cdot)} \times [-1, 1]$, except $X^{(3D)}_{0} = X^{(2D)}_{0} \times [-0.1, 0.1]$.}
All experiments were performed on an Intel i7-1360P processor and were limited to 8G of RAM.
The code and details of the experiments can be found at \url{https://github.com/aria-systems-group/berry-er}.

\begin{table}[t] 
    \centering
    \caption{\edt{Experimental Results. Limited to 8G of RAM. A 2D (top) and 3D (bottom) system was verified. Two problem setups were considered: simple (S) and hard (H) where the hard problem requires a more expressive barrier. \textbf{OM} and \textbf{TO} indicate out-of-memory and time-out respectively. $M$ is the number of scalar decision variables and $C$ is the number of constraints.}}\label{tab:exp}
    \edt{
    \scalebox{0.85}{
    \begin{tabular}{|c|c|c|c|c||c|c|c|c|c|}
        \hline
                      \multicolumn{5}{|c||}{Bernstein} & \multicolumn{5}{c|}{SoS} \\ \cline{1-10} 
                      $m$ & $\kappa$ & $t$ (s) & $\delta_{\safe}$ & $M/C$ & $m$ & $m_{\lambda}$ & $t$ (s) & $\delta_{\safe}$ & $M/C$ \\ \hline
         \multicolumn{10}{|c|}{2D-S}
        % \rule{0pt}{5pt} 
         \\ \hline
                      4 & 4 & 0.04 & 0.518 & 17/4k & 8 & 4 & 0.12 & 0.90 & 1k/270\\ %\hline
                      6 & 4 & 0.09 & 0.886 & 30/7k & 12 & 4 & 0.34 & 0.952 & 3k/546\\ %\hline
                      6 & 10 & 0.49 & 0.901 & 30/46k & 12 & 16 & 2.87 & 0.997 & 16k/1k\\ %\hline
                      8 & 4 & 0.65 & 0.968 & 47/12k & 16 & 4 & 0.48 & 0.98 & 7k/918\\ %\hline
                      8 & 10 & 1.91 & 0.979 & 47/78k & 16 & 20 & 1.85 & 0.998 & 33k/2k\\ %\hline
                      10 & 4 & 0.97 & 0.988 & 68/19k & 20 & 4 & 1.44 & 0.991 & 16k/1k\\ %\hline
                      10 & 10 & 11.2 & 0.993 & 68/116k & 20 & 24 & 2.43 & 0.998 & 65k/2k\\ %\hline
                      14 & 4 & 68.4 & 0.997 & 112/35k & 28 & 12 & 61.0 & 0.998 & 70k/3k\\ \hline
         \multicolumn{10}{|c|}{2D-H}
         %\rule{0pt}{5pt} 
         \\ \hline
                      6 & 6 & 0.64 & 0 & 30/44k & 12 & 12 & 0.9 & 0.181 & 10k/896 \\ %\hline
                      6 & 10 & 1.90 & 0 & 30/123k & 12 & 24 & 5.62 & 0.182 & 90k/3k \\ %\hline
                      %& 7 & 10 & 2.40 & 0 & \checkmark & 14 & 12 & 1.86 & 0.299 & \checkmark \\ %\hline
                      8 & 6 & 2.76 & 0 & 47/75k & 16 & 12 & 3.00 & 0.299 & 14k/1k \\ %\hline
                      8 & 10 & 6.01 & 0 & 47/209k & 16 & 24 & 4.17 & 0.303 & 90k/3k \\ %\hline
                      10 & 6 & 9.92 & 0.018 & 68/114k & 20 & 12 & 4.32 & 0.299 & 25k/2k \\ %\hline
                      10 & 10 & 42.2 & 0.021 & 68/317k & 20 & 24 & 6.87 & 0.348 & 91k/3k \\ %\hline
                      12 & 6 & 101 & 0.176 & 93/161k & 24 & 12 & 8.28 & 0.297 & 43k/3k \\ %\hline
                      12 & 10 & \textbf{OM} & - & - & 24 & 24 & 14.2 & 0.397 & 93k/3k \\ %\hline
                      14 & 6 & \textbf{TO} & - & - & 28 & 12 & 23.2 & 0.300 & 70k/3k \\ %\hline
                      14 & 10 & \textbf{OM} & - & - & 28 & 24 & 14.5 & 0.412 & 115k/3k \\ \hline
         \multicolumn{10}{c}{}
         %\rule{0pt}{5pt} 
         \\ \hline
                      $m$ & $m^+$ & $t$ (s) & $\delta_{\safe}$ & $M/C$ & $m$ & $m_{\lambda}$ & $t$ (s) & $\delta_{\safe}$ & $M/C$ \\ \hline
         \multicolumn{10}{|c|}{3D-S}
         %\rule{0pt}{5pt} 
         \\ \hline
                      4 & 20 & 3.29 & 0.162 & 37/39k & 8 & 12 & 2.16 & 0.976 & 79k/4k \\ %\hline
                      5 & 20 & 8.06 & 0.211 & 58/187k & 10 & 12 & 4.94 & 0.992 & 80k/5k \\ %\hline
                      6 & 10 & 4.34 & 0.609 & 86/64k & 12 & 4 & 6.82 & 0.93 & 32k/4k \\ %\hline
                      6 & 20 & 17.9 & 0.732 & 86/216k & 12 & 12 & 19.0 & 0.996 & 82k/5k \\ %\hline
                      7 & 10 & 10.3 & 0.650 & 122/79k & 14 & 4 & 38.4 & 0.961 & 66k/5k \\ %\hline
                      7 & 20 & \textbf{OM} & - & - & 14 & 12 & 37.4 & 0.997 & 108k/5k \\ \hline
                    %& 8 & 10 & - & - & \textbf{OM} & 16 & 12 & 184.0 & 0.996 & \checkmark \\ \hline
         \multicolumn{10}{|c|}{3D-H}
         %\rule{0pt}{5pt} 
         \\ \hline
                      6 & 10 & 12.9 & 0 & 86/171k & 12 & 4 & 6.86 & 0 & 40k/5k \\ %\hline
                      6 & 20 & \textbf{OM} & - & - & 12 & 12 & 42.2 & 0.180 & 118k/6k \\ %\hline
                      7 & 10 & \textbf{OM} & - & - & 14 & 12 & 90.8 & 0.298 & 114k/7k \\ \hline
    \end{tabular}
    }
    }
    \vspace{-5mm}
\end{table}

Table \ref{tab:exp} shows the experimental results.
Each experiment varied the degree of the barrier ($m$). \edt{To ensure both barriers have comparable expressivity, the SoS degree is twice the Bernstein degree.} For Bernstein in 2D, increasing the subdivision ($\kappa$) yielded the best results; however, for 3D, only degree increase ($m^+$) yielded solutions without time-out. For SoS, the degree of the Lagrange multipliers ($m_\lambda$) was varied.
\edt{For 2D (S), the Bernstein method approaches the efficacy of SoS with many subdivisions.} 
%\edt{The 2D (H) problem illustrates how the Bernstein relaxation provides looser bounds for comparable computation time compared to SoS.}
The 2D (H) problem illustrates how the Bernstein relaxation fails to compete with SoS on the same problem and polynomial template. In 3D, the Bernstein method struggles to produce any meaningful result and runs out of memory for higher degrees, while SoS produces consistent results. 

\begin{remark}
    For the purposes of generating absolutely correct certificates of safety, one must also consider the rigorous numerical accuracy of solutions produced by either approach which is dependent on the solver, and is out of the scope of this work. \edt{In our experiments, numerical stability issues were observed with only the SoS approach when $m_\lambda \geq 28$.} Nonetheless, Bernstein relaxations typically yield very accurate results with high numerical stability \cite{ben2016linear}. 
\end{remark}

Interestingly, these trends reveal a nuanced discrepancy between the theoretical predictions and empirical results.
The struggle of the Bernstein method can mainly be attributed to a blow-up of constraints in the LP when increasing the dimension of the problem or the degree of the barrier. 
%These findings show that, contrary to the less preferable convergence rates, increasing the degree of the Lagrangian multipliers yields a significantly tighter bound for the computational burden compared to the tuning parameters for Bernstein. 
\edt{These findings show that, contrary to the preferable convergence rates, for higher dimension problems, the Bernstein method runs out of time and memory well before being able to ascertain convergence}. The Bernstein method may require better relaxation techniques, as well as problem-specific constraint reduction to match SoS, which also benefits from recent improvements of SDP tools.

\section{Conclusion and Discussion}

This work studies the difference between using Bernstein and the state-of-the-art SoS polynomial relaxations for formulating and synthesizing SBF certificates. The proposed Bernstein formulation results in a LP, which is a simpler optimization problem than SDP for the SoS formulation. However, since both Bernstein and SoS are merely \textit{relaxations} rather than exact bounds, one must tune the degree to which extra constraints and/or decision variables are added to tighten these relaxations. Beyond the simplicity of a LP, the Bernstein approach affords preferable bounds on convergence rates for the relaxations over the convergence of SoS relaxations. 
% 
% This work compares the use of Bernstein polynomials versus state-of-the-art SoS polynomial relaxations for formulating and synthesizing SBF certificates. The Bernstein formulation results in an LP, which is a simpler optimization problem compared to the SDP used for SoS relaxations. However, both Bernstein and SoS are relaxations, not exact bounding methods, and thus require tuning of the degree to adjust the constraints and decision variables to tighten these relaxations. While the Bernstein approach offers simpler LP-based optimization, it provides more favorable bounds on the convergence rates compared to SoS relaxations.
% 
% Interestingly, our empirical results illustrate that i) tuning the relaxation parameters has a profound effect on the safety certificate and ii) the Bernstein approach scales worse that SoS in computation time, accuracy, and memory. We use this letter to bring light to a counterintuitive discrepancy between theoretical formulation and empirical calculation for SBF synthesis, and possible other polynomial-based formal procedures. Additionally, due to the blow-up of constraints, we caution the community when using Bernstein polynomials as indeterminates in optimization, as opposed to a using them for bounding polynomials known \textit{a priori}, e.g., as seen in \cite{garloff1993bernstein}.

Our empirical results reveal two key insights: (i) tuning the relaxation parameters significantly impacts the safety certificate, and (ii) the Bernstein approach scales worse than SoS in terms of computation time, accuracy, and memory usage. This letter highlights a counterintuitive discrepancy between the theoretical formulation and empirical outcomes for SBF synthesis, and potentially other polynomial-relaxation procedures. Tight Bernstein relaxations require many constraints, exacerbated by the number of region of interest constraints present in SBF synthesis problems. \edt{Future work is needed to systematically reduce constraints to make the Bernstein approach comparable with SoS.} Moreover, due to the potential blow-up of constraints, we advise caution when using Bernstein polynomials as indeterminates in optimization, as opposed to applying them to bound polynomials known \textit{a priori}~as~in~\cite{garloff1993bernstein}. 
%\edt{However, as an interesting future direction, we note that the geometric relationship of Bernstein polynomial coefficients may lay the foundation for systematic hyperparameter selection strategies.}

\bibliographystyle{IEEEtran}
\bibliography{bibliography}

\addtolength{\textheight}{-12cm}   % This command serves to balance the column lengths
                                  % on the last page of the document manually. It shortens
                                  % the textheight of the last page by a suitable amount.
                                  % This command does not take effect until the next page
                                  % so it should come on the page before the last. Make
                                  % sure that you do not shorten the textheight too much.

\end{document}